\numberwithin{equation}{section}
\par \vspace{\baselineskip}%
 \noindent \textbf{Acknowledgements.}}%
\par \vspace{\baselineskip}}
\newlist{clist}{enumerate}{1}
\setlist*[clist]{label=(\roman*), nosep}
\crefname{thm}{Theorem}{Theorems}
\crefname{dfn}{Definition}{Definitions}
\crefname{prp}{Proposition}{Propositions}
\crefname{lem}{Lemma}{Lemmas}
\crefname{cor}{Corollary}{Corollaries}
\crefname{clm}{Claim}{Claims}
\crefname{fct}{Fact}{Facts}
\crefname{rmk}{Remark}{Remarks}
\crefname{eg}{Example}{Examples}
\crefname{figure}{Figure}{Figures}
\crefname{table}{Table}{Tables}
\crefname{section}{\S\!}{\S\S\!}
\crefname{subsection}{\S\!}{\S\S\!}
\crefname{subsubsection}{\S\!}{\S\S\!}
\crefname{appendix}{Appendix}{Appendices}
\crefname{equation}{}{}
\theoremstyle{definition}
\newtheorem{thm}{Theorem}[section]
\newtheorem{dfn}[thm]{Definition}
\newtheorem{prp}[thm]{Proposition}
\newtheorem{lem}[thm]{Lemma}
\newtheorem{cor}[thm]{Corollary}
\newtheorem{fct}[thm]{Fact}
\newtheorem{rmk}[thm]{Remark}
\newtheorem{eg}[thm]{Example}
\newtheorem*{rmk*}{Remark}
\newcommand{\pd}{\partial}
\newcommand{\vp}{\varpi}
\newcommand{\ol}{\overline}
\newcommand{\ul}{\underline}
\newcommand{\wh}{\widehat}
\newcommand{\ceq}{\coloneqq} 
\newcommand{\sd}{\slashed{\pd}} 
\newcommand{\xr}{\xrightarrow}
\newcommand{\xrr}[1]{\xrightarrow{\ #1 \ }{}}
\newcommand{\inj}{\hookrightarrow}
\newcommand{\srj}{\twoheadrightarrow}
\newcommand{\sr}[1]{\overset{#1}{\srj}}
\newcommand{\lto}{\longrightarrow}
\newcommand{\sto}{\xr{\sim}}
\newcommand{\lsto}{\xrr{\sim}}
\newcommand{\linj}{\lhook\joinrel\longrightarrow}
\newcommand{\mto}{\mapsto}
\newcommand{\lmto}{\longmapsto}
\newcommand{\sqr}{\sqrt{\ }}
\newcommand{\ev}{\ol{0}}
\newcommand{\od}{\ol{1}}
\newcommand{\Zt}{\bbZ/2\bbZ}
\newcommand{\ch}{\textup{ch}}
\newcommand{\qc}{\textup{qc}}
\newcommand{\tsc}{\textup{sc}}
\newcommand{\tred}{\textup{red}}
\newcommand{\even}{\textup{even}}
\newcommand{\tprd}{{\textstyle\prod}}
\newcommand{\bbA}{\mathbb{A}}
\newcommand{\bbC}{\mathbb{C}}
\newcommand{\bbN}{\mathbb{N}}
\newcommand{\bbZ}{\mathbb{Z}}
\newcommand{\clJ}{\mathcal{J}}
\newcommand{\clL}{\mathcal{L}}
\newcommand{\clS}{\mathcal{S}}
\newcommand{\shA}{\EuScript{A}}
\newcommand{\shD}{\EuScript{D}}
\newcommand{\shE}{\EuScript{E}}
\newcommand{\shF}{\EuScript{F}}
\newcommand{\shI}{\EuScript{I}}
\newcommand{\shJ}{\EuScript{J}}
\newcommand{\shK}{\EuScript{K}}
\newcommand{\shL}{\EuScript{L}}
\newcommand{\shM}{\EuScript{M}}
\newcommand{\shN}{\EuScript{N}}
\newcommand{\shO}{\EuScript{O}}
\newcommand{\shS}{\EuScript{S}}
\newcommand{\frg}{\mathfrak{g}}
\newcommand{\frk}{\mathfrak{k}}
\newcommand{\frm}{\mathfrak{m}}
\newcommand{\JS}{\clJ\clS}
\newcommand{\LS}{\clL\clS}
\newcommand{\catC}{\mathsf{C}}
\newcommand{\cSet}{\mathsf{Set}}
\newcommand{\SSp}{\mathsf{SSp}}
\newcommand{\SSch}{\mathsf{SSch}}
\newcommand{\JC}[1]{\clJ\clS_{C^{#1}}}
\newcommand{\LC}[1]{\clL\clS_{C^{#1}}}
\newcommand{\abs}[1]{\left| #1 \right|}
\newcommand{\dbr}[1]{\llbracket #1 \rrbracket} 
\newcommand{\dpr}[1]{(\!( #1 )\!)}
\newcommand{\rst}[2]{\left. #1 \right|_{#2}}
\newcommand{\Dqcl}[1]{\mathsf{D}^{\qc,l}_{#1}}
\newcommand{\Dqcr}[1]{\mathsf{D}^{\qc,r}_{#1}}
\DeclareMathOperator{\ad}{ad}
\DeclareMathOperator{\id}{id}
\DeclareMathOperator{\Sp}{Sp}
\DeclareMathOperator{\Aut}{Aut}
\DeclareMathOperator{\Der}{Der}
\DeclareMathOperator{\Hom}{Hom}
\DeclareMathOperator{\Ker}{Ker}
\DeclareMathOperator{\Lie}{Lie}
\DeclareMathOperator{\sord}{sord}
\DeclareMathOperator{\Spec}{Spec}
\DeclareMathOperator{\Wedge}{{\textstyle\bigwedge}}
\DeclareMathOperator{\CDR}{\EuScript{CDR}}
\DeclareMathOperator{\shBer}{\EuScript{B}\kern-.1em\mathit{er}}
\DeclareMathOperator{\shDer}{\EuScript{D}\kern-.1em\mathit{er}}
\DeclareMathOperator{\shEnd}{\EuScript{E}\kern-.1em\mathit{nd}}
\DeclareMathOperator{\shHom}{\EuScript{H}\kern-.15em\mathit{om}}
\DeclareMathOperator{\shSym}{\EuScript{S}\kern-.1em\mathit{ym}}
\begin{document}

\title{$N_K=1$ SUSY structure of chiral de Rham complex \\ from the factorization structure}
\author{Takumi Iwane, Shintarou Yanagida}
\date{2024.09.06}
\address{Graduate School of Mathematics, Nagoya University.
 Furocho, Chikusaku, Nagoya, Japan, 464-8602.}
\email{takumi.iwane.c8@math.nagoya-u.ac.jp, yanagida@math.nagoya-u.ac.jp}

\begin{abstract}
We elucidate the comment in (Kapranov-Vasserot, Adv.\ Math., 2011, Remark 5.3.4) 
that the $1|1$-dimensional factorization structure of the formal superloop space of 
a smooth algebraic variety $X$ induces the $N_K=1$ SUSY vertex algebra structure 
of the chiral de Rham complex of $X$.
\end{abstract}

\maketitle
{\small \tableofcontents}

\section{Introduction}\label{s:0}

This note is intended to elucidate the comment in \cite[Remark 5.3.4]{KV11}
concerning the chiral de Rham complex $\Omega_X^{\ch}$ of a smooth algebraic variety $X$
and the factorization structure of the global formal loop space $\LC{}X$ 
over a $1|1$-dimensional smooth supercurve $C$. 
Let us start the explanation with the background.
We work over the field $\bbC$ of complex numbers throughout this note.

\subsection*{Background}

In \cite{BD}, Beilinson and Drinfeld constructed a geometric theory of vertex algebras
by introducing (Beilinson-Drinfeld) chiral algebras and factorization algebras.
The theory is build on the language of $\shD$-modules, and gives a coordinate-free formulation.
See also \cite[Chap.\ 19,20]{FB} and \cite{FG} for the details. 

In \cite{MSV}, Malikov, Schechtman and Vaintrob introduced the chiral de Rham complex $\Omega_X^{\ch}$
of a smooth algebraic variety $X$. 
It is a sheaf of dg (differential graded) vertex algebras on $X$, 
and can be naively regarded as a loop space analogue, or a semi-infinite analogue, 
of the de Rham complex $\Omega_X$.

This naive analogy is clarified by Kapranov and Vasserot in \cite{KV04}, 
where the notion of a factorization space is introduced 
as a non-linear analogue of Beilinson-Drinfeld factorization algebras. 
In \cite{KV04}, the formal loop space $\clL X$ of a smooth algebraic variety $X$ and 
the global formal loop space $\clL_{C_0}X$ over a smooth curve $C_0$ are constructed as 
(tamely-behaving) ind-schemes, and a structure of a factorization space on $\clL_{C_0}X$ is introduced.
Moreover, the de Rham complex $\CDR_X$ (associated to the right $\shD_X$-module $\Omega_X^{\dim X}$) 
is constructed from the ind-scheme $\clL_{C_0}X$ as a sheaf of complexes on $X \times C_0$.
The fiber of $\CDR_X$ over a point of $C_0$, which is a sheaf of dg vertex algebras on $X$,
is identified with the chiral de Rham complex $\Omega_X^{\ch}$.
Finally, the factorization structure on $\clL_{C_0}X$ induces a chiral algebra 
structure on $\CDR_X$ over $C_0$,
which coincides with the vertex algebra structure on $\Omega_X^{\ch}$.
Thus, we have gained some insight into the geometric origin of 
the vertex algebra structure of $\Omega_X^{\ch}$.

The chiral de Rham complex $\Omega_X^{\ch}$ enjoys many interesting properties,
as studied extensively in these 25 years.
One of them is the supersymmetry.
To explain that, we recall the notion of a SUSY vertex algebra introduced by Heluani and Kac \cite{HK},
which is a superfield formulation of a vertex superalgebra equipped with supersymmetry.
There are two types of SUSY vertex algebras: $N_W=N$ and $N_K=N$ algebras, and what is called 
an $N$-supersymmetry in physics (and mathematics) literatures corresponds to the $N_K=N$ one.
Let us also mention the work of Heluani \cite[\S4]{H} 
which constructed the theory of chiral algebras on supercurves 
and investigated the relation to SUSY vertex algebras.

It is known that the chiral de Rham complex $\Omega_X^{\ch}$ has a natural structure 
of an $N_K=1$ SUSY vertex algebra \cite[Example 5.13]{HK}, \cite[\S4]{BHS}.
At this point, one may ask if this SUSY structure on $\Omega_X^{\ch}$ can be understood 
in terms of some geometric argument, which would be an extension of the argument in \cite{KV04}.

In \cite{KV11}, Kapranov and Vasserot generalized the formal loop space construction in \cite{KV04}
to the super setting.
As we will review in \cref{s:L}, they constructed the formal superloop space $\LS Y$ 
for a smooth superscheme $Y$ of finite type, 
and the global formal superloop space $\LC{}Y$ for $Y$ and a smooth supercurve $C$.
As will be reviewed in \cref{fct:FS:LSC}, 
they also introduced a structure of a factorization space on $\LC{}Y$.

As will be explained in \cref{ss:L:NW}, these results in \cite{KV11} and some materials 
from \cite{BD,KV04} imply that the chiral de Rham complex $\Omega_X^{\ch}$ 
of a smooth algebraic variety $X$ can be identified with the fiber of the sheaf $\CDR_X$
over a point of a smooth supercurve $C$ of dimension $1|1$,
and also with the fiber of the structure sheaf of the global formal superloop space $\LC{}X$. 
The factorization structure on $\LC{}X$ induces on the last one 
a natural structure $\mu$ \eqref{eq:FS:mu} of a chiral algebra over $C$.
According to \cite[\S4]{H}, such $\mu$ corresponds to an $N_W=1$ SUSY vertex algebra structure.
So, there is a slight difference on SUSY structures:
the natural one is $N_K=1$, but what is obtained is $N_W=1$.

Now, in \cite[Remark 5.3.4]{KV11}, 
it is commented that the factorization space structure on $\LC{}X$
gives ``a geometric reason'' for the natural $N_K=1$ SUSY structure on $\Omega_X^{\ch}$.
However, as we mentioned above, a straightforward argument only yields an $N_W=1$ structure.
As mentioned in the beginning, our goal is to elucidate this problem.

\subsection*{Main argument}

Our solution of this problem is given in \cref{s:NK}. The idea is simple: 
As the base supercurve $C$ of $\LC{}X$, we take a superconformal curve $(C,\shS)$.
Then, the $N_W=1$ SUSY chiral product $\mu$ \eqref{eq:FS:mu} over $C$ 
constructed from the factorization structure can be naturally modified to 
a superconformal one $\mu^{\tsc}$ \eqref{eq:NK:mu-sc} over $(C,\shS)$.
The modification is made by simply replacing the diagonal $\Delta \subset C^2$ 
with the superdiagonal $\Delta^{\tsc} \subset C^2$ (see \cref{dfn:SC:Dsc}).
Note that we used the superconformal structure $\shS$ to get the well-defined $\Delta^{\tsc}$.

\subsection*{Organization}

\cref{s:SC}--\cref{s:FS} are more or less the review of necessary materials,
and \cref{s:NK} gives the justification of the comment in \cite[Remark 5.3.4]{KV11}.

The starting \cref{s:SC} gives preliminaries from super algebraic geometry,
In \cref{ss:SC:pre}, we introduce basic notations.
In \cref{ss:SC:SC}, we introduce the notions of a smooth supercurve $C$ 
and of a superconformal curve $(C,\shS)$.
In \cref{ss:SC:D}, we briefly recall the theory of algebraic $\shD$-modules in the super setting.
\cref{ss:SC:SCD} gives a self-contained explanation of $\shD$-modules on a superconformal curve.

\cref{s:L} is a recollection of the formal superloop space $\LS X$ and the global one $\LC{}X$
from \cite{KV04,KV11}.

\cref{s:FS} recalls the notion of a factorization space and a factorization $\shD$-module 
from \cite{KV04,KV11}. 
In \cref{ss:L:NW}, we give a few comments on the explained materials.
In particular, we explain that the factorization space structure on $\LC{}X$ yields 
an $N_W=1$ SUSY vertex algebra structure on the chiral de Rham complex $\Omega_X^{\ch}$.

The final \cref{s:NK} gives our justification of the comment in \cite[Remark 5.3.4]{KV11}.

\subsection*{Global notation}

The following list gives an overview of the terminology and notations used in the text.
\begin{itemize}
\item
The symbol $\bbN$ denotes the set $\{0,1,2,\dotsc\}$ of all nonnegative integers.


\item 
A ring or an algebra means a unital associative one unless otherwise stated.

\item
A section of a sheaf $\shE$ means a local section unless otherwise stated,
and we denote $e \in \shE$ to indicate that $e$ is a section of $\shE$, i.e.,
there is an open subset $U$ such that $e\in\shE(U)$.

\item
For a category $\catC$, we denote $X \in \catC$ to say that $X$ is an object of $\catC$.
The morphism set in a category $\catC$ is denotes by $\Hom_{\catC}(\cdot,\cdot)$.

\item
We follow \cite{DM} for the terminology of super mathematics.
We denote the cyclic group of order $2$ by $\Zt=\{\ev,\od\}$,
and a $\Zt$-grading is called \emph{parity}.
The parity of an object $x$ is denoted by $\abs{x} \in \Zt$.

\end{itemize}

\section{Supercurves and superconformal curves}\label{s:SC}

Let us start with notations of super algebraic geometry.
We use the language given in \cite[\S1, \S3]{KV11}.
See also \cite{BH,BHP} for the foundation.
We refer to \cite[Chap.\ 2]{M} for the geometry of supercurves. 

\subsection{Preliminary of notations}\label{ss:SC:pre}

Following \cite[\S1.1]{KV11}, we denote a superscheme as $X=(\ul{X},\shO_X)$,
where $\ul{X}$ is the underlying topological space and $\shO_X$ is the structure sheaf.
The $\Zt$-structure of $\shO_X$ is denoted as $\shO_X = \shO_{X,\ev} \oplus \shO_{X,\od}$
with $\Zt=\{\ev,\od\}$. 
Given a superscheme $X$, its \emph{even part} is defined to be the scheme 
$X_{\even} \ceq (\ul{X},\shO_X/\shJ_X) = (\ul{X},\shO_{X,\ev}/\shO_{X,\od}^2)$, 
where $\shJ_X \subset \shO_X$ is the ideal generated by odd sections, 
and its \emph{reduced scheme} $X_{\tred}$ is defined to be 
\begin{align}\label{eq:SC:red}
 X_{\tred} \ceq (\ul{X},\shO_{X,\ev}/\sqrt{\shO_{X,\ev}}).
\end{align}

As given in \cite[\S1.2, p.1085]{KV11}, we have the notion of 
a \emph{smooth} (\emph{algebraic}) \emph{supervariety} $X$ over the complex number field $\bbC$.
The structure morphism $X \to \Spec\bbC$ is then a smooth morphism, and the dimension  $\dim X$
(relative over $\Spec\bbC$) is in $\bbN^2$, denoted as $p|q$.
For example, the \emph{affine superspace} of dimension $p|q$
\begin{align}\label{eq:SC:Apq}
 \bbA^{p|q}_\bbC \ceq \Spec(\bbC[x_1,\dotsc,x_p] \otimes_{\bbC} \bbC[\xi_1,\dotsc,\xi_q])
\end{align} 
is a smooth supervariety, 
where $x_i$'s are even and $\xi_j$'s are odd coordinate functions.

Hereafter we suppress the word ``over $\bbC$'' if confusion may not occur.
As explained in \cite[\S1.1, (1.1.2)]{KV11}, 
for a morphism $A \to B$ of supercommutative algebras, 
we have the \emph{module of K\"ahler differentials}
\[
 \Omega_{B/A}^1 \ceq I/I^2, \quad I \ceq \Ker(B \otimes_A B \xr{m} B),
\]
where $m$ denotes the multiplication map.
It is a $B$-module equipped with the universal differential $d\colon B \to \Omega_{B/A}^1$.
We understand that $d$ is \emph{even} (see \cite[\S1.2, p.103, Notation]{Y} for the detail).
As a global counterpart, for a morphism $X \to Y$ of superschemes, 
we have the \emph{sheaf $\Omega_{X/Y}^1$ of K\"ahler differentials} on $X$. 
It is a quasi-coherent $\shO_X$-module equipped 
with an even universal differential $d\colon \shO_X \to \Omega_{X/Y}^1$.
We also have the de Rham (super)complex 
\begin{align}\label{eq:SC:deRham}
 0 \lto \shO_X \xrr{d} \Omega_{X/Y}^1 \xrr{d} \Omega_{X/Y}^2 \xrr{d} \cdots, \quad 
 \Omega_{X/Y}^p \ceq \Wedge_{\shO_X}^p \Omega_{X/Y}^1.
\end{align}
Note that this complex is unbounded in general even if $X$ is smooth over $Y$.
If $Y=\Spec\bbC$, we denote
\begin{align}\label{eq:SC:Omega}
 \Omega_X^1 \ceq \Omega_{X/\Spec\bbC}^1, \quad 
 \Omega_X^p \ceq \Wedge_{\shO_X}^p \Omega_X^1 \quad (p \in \bbN).
\end{align}

Let $X$ be a smooth supervariety of dimension $p|q$.
By \cite[\S1.2]{KV11}, the sheaf $\Omega_X^1$ is locally free of rank $p|q$.
The $\shO_X$-dual of $\Omega_X^1$ is the the \emph{tangent sheaf} $\Theta_X$ of $X$, 
and it is isomorphic to the sheaf $\shDer_{\bbC}(\shO_X)$ of derivations of $\shO_X$ over $\bbC$:
\[
 \Theta_X \ceq \shHom_{\shO_X}(\Omega_X^1,\shO_X) \cong \shDer_{\bbC}(\shO_X).
\] 
The tangent sheaf is a quasi-coherent $\shO_X$-module 
(regarded as $\shHom_{\shO_X}(\Omega_X^1,\shO_X)$)
and a sheaf of Lie $\bbC$-superalgebras on $X$ (regarded as $\shDer_{\bbC}(\shO_X)$).
It is moreover a Lie superalgebroids on $X$ (see \cite[2.9.1, 2.9.2]{BD} for Lie algebroids).

Let $X$ be as before.
By \cite[Proposition A.17]{BHP} (see also \cite[Tag 054L]{S} for the non-super setting), 
for every point $x$ of $X$, there exists an \'etale morphism 
\begin{align}\label{eq:SC:Z}
 Z=(z_1,\dotsc,z_p,\zeta_1,\dotsc,\zeta_q)\colon U \lto \bbA_{\bbC}^{p|q} 
\end{align}
from some affine open $U \subset X$ containing $x$,  
with $z_i$'s even and $\zeta_j$'s odd, such that they generate the maximal ideal 
$\frm_x \subset \shO_{X,x}$  at $x$ and the differentials 
\[
 dZ = (dz_1,\dotsc,dz_p,d\zeta_1,\dotsc,d\zeta_q)
\]
form a basis of the linear superspace $\Omega_{X,x}^1$ of K\"ahler differentials.
The corresponding derivations 
\begin{align}\label{eq:SC:pdZ}
 \pd_Z = (\pd_{z_1},\dotsc,\pd_{z_p},\pd_{\zeta_1},\dotsc,\pd_{\zeta_q}),
\end{align}
where $\pd_{z_1} \ceq \frac{\pd}{\pd z_1}$ and so on, 
form a basis of the tangent superspace $\Theta_{X,x}$.
We call the morphism $Z$ (or the pair $(U,Z)$) a \emph{local coordinate system} at $x$.

\subsection{The differential operators \texorpdfstring{$\shD$}{D} in the super setting}\label{ss:SC:D}

Let $X$ be a smooth supervariety over $\bbC$, 
and $\shEnd_{\bbC}(\shO_X)$ be the endomorphism sheaf of the superalgebra $\shO_X$.
The \emph{sheaf of differential operators on $X$} is the subsheaf 
\begin{align}\label{eq:SC:D}
 \shD_X \subset \shEnd_{\bbC}(\shO_X)
\end{align}
generated by $\shO_X$ (considered as multiplication operators) 
and $\Theta_X \cong \shDer_{\bbC}(\shO_X)$ (considered as derivations).

In this note, a \emph{quasi-coherent left} (resp.\ \emph{right}) $\shD_X$-module means 
a left (resp.\ right) $\shD_X$-module which is quasi-coherent as an $\shO_X$-module.
We denote the category of quasi-coherent left, resp.\ right, $\shD_X$-modules by 
\begin{align*}
 \Dqcl{X}, \quad \Dqcr{X}.
\end{align*}
In particular, $\Dqcl{X}$ is the category of quasi-coherent $\shO_X$-modules
equipped with (left) integrable connections along $X$.

As in the non-super case (\cite[\S1.2]{HTT}, \cite[2.1.1]{BD}), if $\shL,\shL' \in \Dqcl{X}$, 
then $\shL \otimes \shL' \ceq \shL \otimes_{\shO_X} \shL'$ is a left $\shD_X$-module.
Thus $\Dqcl{X}$ is a monoidal category with unit object $\shO_X$.
Also, if $\shL \in \Dqcl{X}$ and $\shM \in \Dqcr{X}$, 
then $\shM \otimes_{\shO_X} \shL$ is a right $\shD_X$-module on which  
$\tau \in \Theta_X \subset \shD_X$ acts as 
$(m \otimes l)\tau = (m\tau) \otimes l - m \otimes(\tau l$).
We denote it by $\shM \otimes \shL \in \Dqcr{X}$.
Moreover, if $\shM,\shM'\in\Dqcl{X}$, then $\shHom_{\shO_X}(\shM,\shM') \in \Dqcl{X}$ by
$(\tau f)(m)=-f(m)\tau+f(m\tau)$.

We denote the \emph{Berezinian sheaf of $X$} by 
\begin{align}\label{eq:SC:Ber}
 \omega_X \ceq \shBer(\Omega_X^1).
\end{align}
It is a right $\shD_X$-module by $\nu\tau \ceq -\Lie_\tau(\nu)$ for $\nu \in \omega_X$ and
$\tau \in \Theta_X$, where $\Lie$ denotes the Lie derivative.
See \cite[\S1.10, \S1.11]{DM} and \cite{N} for the details of $\omega_X$.
As shown in \cite{P}, the categories $\Dqcl{X}$ and $\Dqcr{X}$ are equivalent by 
\begin{align}\label{eq:D:L=R}
\begin{split}
 \Dqcl{X} \lsto \Dqcr{X}, \quad 
&\shL \lmto \shL^r \ceq \omega_{X} \otimes \shL, \\
 \Dqcr{X} \lsto \Dqcl{X}, \quad 
&\shM \lmto 
 \shM^l \ceq \omega_X^\vee \otimes_{\shO_X} \shM \cong \shHom_{\shO_X}(\omega_{X},\shM),
\end{split}
\end{align}
If $q=0$, i.e., $X$ is even, then $\omega_X$ is equal to the top differential forms $\Omega_X^p$.
(c.f.\ \eqref{eq:SC:deRham}).
The category $\Dqcr{X}$ is a monoidal category with 
$\shM \otimes \shM' \ceq \bigl((\shM^l) \otimes (\shM'^l)\bigr)^r \in \Dqcr{X}$
and unit object $\omega_X$.

For a morphism $f\colon X \to Y$ of smooth superschemes, we denote the (underived) direct and 
inverse image functors for (left or right) $\shD$-modules by $f_*$ and $f^*$, respectively.
For a left $\shD_Y$-module $\shL$, the $\shD$-module inverse image $f^*\shL$ is given 
by the same formula as an $\shO_Y$-module, equipped with a natural left $\shD_X$-module structure.
The right $\shD$-module inverse image is then given by the left-right equivalence \eqref{eq:D:L=R}.
For a right $\shD_X$-module $\shM$,  the $\shD$-module direct image $f_*\shM$ is the $\shO_Y$-module 
\begin{align}\label{eq:SC:f_*}
 f_\cdot(\shL \otimes_{\shD_X}\shD_{X \to Y}), \quad 
 \shD_{X \to Y} \ceq \shO_X \otimes_{f^{-1}\shO_Y}f^{-1}\shD_Y, 
\end{align}
equipped with a natural right $\shD_Y$-module structure.
Here $f_\cdot$ denotes the $\shO$-module direct image.
The left $\shD$-module direct image is then given by the left-right equivalence \eqref{eq:D:L=R}.

See \cite{P} for the details of the $\shD$-module direct and inverse image functors
(although \cite{P} develops the theory in the super-complex-analytic setting, 
the translation to the super-algebraic setting is straightforward).
The basic properties of the non-super versions of these functors 
are explained in \cite[Chap.\ 1]{HTT} and \cite[2.1.2--2.1.4]{BD},
and they hold similarly in the super case.

\subsection{Supercurves and superconformal curves}\label{ss:SC:SC}

\begin{dfn}
Let $N \in \bbZ_{>0}$.
A \emph{smooth supercurve of dimension $1|N$} is a smooth supervariety $C$ with $\dim(C)=1|N$.
A \emph{superconformal curve of dimension $1|N$} is 
a smooth supercurve $C$ of dimension $1|N$ equipped with a locally free subsheaf 
$\shS \subset \Theta_C$ of rank $0|N$ such that the composition map 
$\shS \otimes_{\bbC} \shS \xr{[\cdot,\cdot]} \Theta_C \xr{\bmod \shS} \Theta_C/\shS$
induces an $\shO_C$-module isomorphism $\shS \otimes_{\bbC} \shS \sto \Theta_C/\shS$.
\end{dfn}

As explained in \cite[\S4]{H}, we can consider an $N_W=N$ SUSY vertex algebras 
as a chiral algebra on a smooth supercurve of dimension $1|N$,
and an $N_K=N$ SUSY vertex algebra as one on a superconformal curve of dimension $1|N$.

In this note, we only consider superconformal curves of dimension $1|1$,
and suppress the term ``of dimension $1|1$'' if no confusion may occur.
They are also called SUSY curves (in \cite{BH,BHP} for example),
and called $\text{SUSY}_1$ curves in \cite{M}.
In the complex analytic setting, they are called super Riemann surfaces (in \cite{W} for example).

Let us recall local properties of superconformal curves.

\begin{lem}\label{lem:SC:SDer}
Let $(C,\shS)$ be a superconformal curve.
\begin{enumerate}
\item \label{i:SDer:1}
For each point $x$ of $C$, there is a local coordinate system $Z=(z,\zeta)$ 
at $x$ such that $\shS$ is locally generated as an $\shO_C$-module by the odd derivation
\begin{align}\label{eq:SC:Zsc}
 \sd_Z  \ceq \pd_{\zeta}  +\zeta  \pd_z.
\end{align}

\item \label{i:SDer:20}
The $\shO_C$-dual $\shS^\vee \ceq \shHom_{\shO_C}(\shS,\shO_C)$ 
satisfies $\shS^\vee \cong \omega_C$.

\item \label{i:SDer:2}
Let $\tau\colon \Omega_C^1 \to \shS^\vee$ be the dual of the injection $\shS \inj \Theta_C$.
Then the composition 
\[
 \Ker\tau \linj \Omega_C^1 \xrr{d} \Omega_C^2 \xrr{\tau \wedge \tau} (\shS^\vee)^{\otimes 2}
\]
yields an isomorphism $\Ker\tau \sto (\shS^\vee)^{\otimes 2}$.
Also, under the pairing $\Theta_C$ and $\Omega_C^1$, the sheaves $\shS$ and $\Ker\tau$ are normal.
Moreover, in the local coordinate system $Z$ in \ref{i:SDer:1},
$\Ker\tau$ is generated by
\begin{align}\label{eq:SC:varpi}
 \vp \ceq dz-\zeta d\zeta,
\end{align}
and this form is globally defined up to multiplication by an even function on $C$.

\item \label{i:SDer:3}
The Lie superalgebra $\shDer_{\bbC}(\shO_C) \cong \Theta_C$ on $C$ 
is generated by the subsheaf $\shS \subset \Theta_C$.

\item \label{i:SDer:4}
The sheaf $\shD_C$ of differential operators on $C$ is generated as a sub-superalgebra of 
$\shEnd_{\bbC}(\shO_C)$ by $\shO_C$ (regarded as multiplication operators) and $\shS$.
\end{enumerate}
\end{lem}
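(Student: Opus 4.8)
The plan is to establish all five parts by working in a local coordinate system and then globalizing where needed. The starting point is the defining property of a superconformal curve: $\shS \subset \Theta_C$ is locally free of rank $0|1$ and the bracket map $\shS \otimes_{\bbC} \shS \to \Theta_C/\shS$ is an $\shO_C$-module isomorphism. I would begin with part \ref{i:SDer:1}, since the explicit generator $\sd_Z$ is the technical backbone for everything else. Fix $x$ and choose any local coordinate system $(z,\zeta)$ at $x$. Then $\shS$, being locally free of rank $0|1$, has a local odd generator $D = f\pd_\zeta + g\pd_z$ with $f$ even invertible (invertibility follows because the image of $D\otimes D$ must generate $\Theta_C/\shS$, forcing $D$ to have a nonzero $\pd_\zeta$-component modulo $\shS$). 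Rescaling by $f^{-1}$ we may assume $D = \pd_\zeta + h\pd_z$ for some odd $h$. Computing $\tfrac12[D,D] = D^2 = (\pd_\zeta h)\pd_z + \text{(terms killed modulo }\shS)$, the isomorphism condition forces $\pd_\zeta h$ to be an even invertible function, and a coordinate change in $(z,\zeta)$ adapted to $h$ brings $D$ into the normal form $\sd_Z = \pd_\zeta + \zeta\pd_z$ satisfying $\sd_Z^2 = \pd_z$. The main obstacle in this note is precisely this normalization step: exhibiting the explicit change of coordinates that achieves $h = \zeta$, which is the super-analogue of choosing a Frobenius chart and must be done carefully with the parity bookkeeping.

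Granting \ref{i:SDer:1}, the remaining parts are essentially computations in the chart $(z,\zeta)$ with $\sd_Z = \pd_\zeta + \zeta\pd_z$. For \ref{i:SDer:20}, I would note that $\shS$ is locally free of rank $0|1$, so $\shS^\vee$ is locally free of rank $0|1$; one checks via the Berezinian that $\omega_C = \shBer(\Omega_C^1)$ is also locally free of rank $0|1$, and the pairing given by the local frame $\sd_Z$ against the dual coframe, compared with the standard local generator of $\omega_C$ coming from $(dz,d\zeta)$, produces the isomorphism $\shS^\vee \cong \omega_C$; the content is to verify the transition functions match, which follows from how both sheaves transform under a change of superconformal coordinate.

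For \ref{i:SDer:2}, I would dualize $\shS \inj \Theta_C$ to get $\tau\colon \Omega_C^1 \to \shS^\vee$ and identify $\Ker\tau$ locally as the annihilator of $\sd_Z$ inside $\Omega_C^1$; pairing $\vp = dz - \zeta\,d\zeta$ against $\sd_Z$ gives $\pair{\sd_Z, \vp} = \pair{\pd_\zeta + \zeta\pd_z,\, dz - \zeta\,d\zeta} = 0$, so $\vp$ generates $\Ker\tau$ as claimed, and since any two superconformal coordinate changes multiply $\vp$ by an even invertible function, $\vp$ is globally defined up to such a scalar. The isomorphism $\Ker\tau \sto (\shS^\vee)^{\otimes 2}$ is then checked by computing $(\tau\wedge\tau)(d\vp)$: one finds $d\vp = -d\zeta\wedge d\zeta = -2\,d\zeta\,d\zeta$ (up to sign conventions on odd forms), which maps to a generator of $(\shS^\vee)^{\otimes 2}$ under $\tau\wedge\tau$, giving the asserted iso; normality of $\shS$ and $\Ker\tau$ under the $\Theta_C$–$\Omega_C^1$ pairing is the statement that each is the full annihilator of the other, immediate from the rank count.

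Finally, \ref{i:SDer:3} and \ref{i:SDer:4} follow from the single computation $\sd_Z^2 = \pd_z$. Indeed \ref{i:SDer:3}: since $\shS$ is generated locally by $\sd_Z$ and $\sd_Z^2 = \pd_z$, the sub-sheaf of Lie algebras generated by $\shS$ contains both $\pd_\zeta + \zeta\pd_z$ and $\pd_z$, hence contains $\pd_\zeta$, hence all of $\Theta_C = \shO_C\pd_z \oplus \shO_C\pd_\zeta$; the $\shO_C$-module span is recovered because the bracket $[\shS,\shS]$ already surjects onto $\Theta_C/\shS$ by hypothesis. Then \ref{i:SDer:4} is the differential-operator counterpart: by \eqref{eq:SC:D} the sheaf $\shD_C$ is generated by $\shO_C$ and $\Theta_C$, and by \ref{i:SDer:3} the sub-superalgebra generated by $\shO_C$ and $\shS$ already contains $\Theta_C$, hence equals $\shD_C$. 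I expect \ref{i:SDer:3} and \ref{i:SDer:4} to be routine once \ref{i:SDer:1} is in hand, so the entire difficulty is concentrated in the normal-form lemma \ref{i:SDer:1}.
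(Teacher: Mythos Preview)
Your proposal is correct and aligns with the paper's approach. The paper's own proof is extremely terse: for parts \ref{i:SDer:1}--\ref{i:SDer:2} it simply cites the literature (Witten, Manin, Bruzzo--Hern\'{a}ndez Ruip\'{e}rez), while for \ref{i:SDer:3} it writes down exactly the relations you use, namely $\pd_z=\tfrac{1}{2}[\sd_Z,\sd_Z]$ and $\pd_\zeta=\sd_Z-\zeta\pd_z$, and then deduces \ref{i:SDer:4} from \ref{i:SDer:3} as you do. Your sketch for \ref{i:SDer:1}--\ref{i:SDer:2} is the standard argument behind those citations, so you are not taking a different route but rather unpacking what the paper leaves to references; your assessment that the only nontrivial content is the normal-form step in \ref{i:SDer:1} is accurate.
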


\begin{proof}
\begin{enumerate}
\item 
It is well-known. See \cite[\S2.1]{W} for example. 

\item 
See \cite[Chap.\ 2, \S6.1]{M} and \cite[Proposition 3.5]{BH}.

\item 
See \cite[Proposition 3.5]{BH} and \cite[\S2.3]{W}.

\item
Take an affine open covering $\{U_i\}_{i \in I}$ of $C$ which consist of local coordinate systems 
satisfying the condition in \ref{i:SDer:1}.
Then the statement from the relations
$\pd_z=\frac{1}{2}[\sd_Z,\sd_Z]$ and $\pd_{\zeta}=\sd_Z-\zeta\pd_z$ in 
the Lie superalgebra $\shEnd_{\bbC}(\shO_C)(U_i)$ for each $i \in I$.

\item
This follows from \ref{i:SDer:3}.
\end{enumerate}
\end{proof}

\begin{dfn}\label{dfn:SC:sc}
We call the local coordinate system $Z$ in \cref{lem:SC:SDer} \ref{i:SDer:1}
a \emph{superconformal coordinate system} on a superconformal curve $(C,\shS)$.
The form $\vp$ in \eqref{eq:SC:varpi} which generates $\Ker \tau \subset \Omega$ is called 
the \emph{contact form} of $(C,\shS)$. 
\end{dfn}

\begin{rmk}
We follow \cite{V} to use the terminology ``contact form'' for $\vp$.
The statement of \cref{lem:SC:SDer} \ref{i:SDer:4} is mentioned in \cite[Chap.\ 2, \S4.2]{M}.
\end{rmk}

Given a superconformal coordinate system $Z$, by \cref{lem:SC:SDer} \ref{i:SDer:4}, 
any section $K$ of $\shD_C$ can be locally written in the form 
$K=\sum_{j=0}^d a_j \sd_Z^j$ with $a_d \ne 0$.
Then we define $\sord_Z(p) \ceq d/2$.
For example, we have $\sord_Z(\pd_z)=\sord_Z(\pd_\zeta)=1$. 
The definition of $\sord_Z$ seemingly depends on the choice of $Z$, but we have:

\begin{fct}[{\cite[Chap.\ 2, 4.3]{M}}]\label{fct:SC:sord}
Let $Z=(z,\zeta)$ and $Z'=(z',\zeta')$ be two superconformal coordinate systems 
at a point of a superconformal curve $(C,\shS)$.
Then, the two filtrations of $\shD_C$ by $\sord_Z$ and $\sord_{Z'}$ coincide.
%
%
\end{fct}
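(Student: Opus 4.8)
The plan is to establish the stronger assertion that $\sord_Z = \sord_{Z'}$ as functions on $\shD_C$, which at once yields the coincidence of the two induced filtrations. Shrinking to an open set on which both $Z$ and $Z'$ are defined, I first compare the superconformal derivations. By \cref{dfn:SC:sc} and \cref{lem:SC:SDer} \ref{i:SDer:1}, each of $\sd_Z$ and $\sd_{Z'}$ generates the rank $0|1$ locally free sheaf $\shS$ over $\shO_C$, and both are odd sections of $\Theta_C$. Two generators of a free rank $0|1$ module differ by a unit of $\shO_C$, which must be even by parity; hence
\begin{align*}
 \sd_{Z'} = u\,\sd_Z, \qquad u \in \shO_{C,\ev}^\times .
\end{align*}
Everything then reduces to understanding how powers of $\sd_{Z'}$ expand in powers of $\sd_Z$.

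The crux is the claim that for every $j \in \bbN$ one has
\begin{align*}
 \sd_{Z'}^{j} = u^{j}\,\sd_Z^{j} + R_j, \qquad \sord_Z(R_j) < j/2 ,
\end{align*}
which I prove by induction on $j$, the cases $j = 0, 1$ being clear. For the step, I compute $\sd_{Z'}^{j+1} = u\,\sd_Z\bigl(u^{j}\sd_Z^{j} + R_j\bigr)$ and expand by the Leibniz rule. Since $u$ is even no Koszul signs appear, so $\sd_Z(u^{j}\sd_Z^{j}) = (\sd_Z u^{j})\sd_Z^{j} + u^{j}\sd_Z^{j+1}$, producing the leading term $u^{j+1}\sd_Z^{j+1}$ with invertible coefficient. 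The error terms $u(\sd_Z u^{j})\sd_Z^{j}$ and $u\,\sd_Z(R_j)$ are controlled by the elementary observation that left multiplication by $\sd_Z$ raises $\sord_Z$ by at most $1/2$; this shows both error terms have $\sord_Z < (j+1)/2$, which finishes the induction.

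Granting the claim, the conclusion is immediate. Writing a local section $K \in \shD_C$ as $K = \sum_{j=0}^{d} b_j \sd_{Z'}^{j}$ with $b_d \ne 0$, so that $\sord_{Z'}(K) = d/2$, and substituting the expansion, the contributions with $j < d$ all have $\sord_Z \le (d-1)/2$, whereas the top term contributes $b_d u^{d}\sd_Z^{d}$ with $b_d u^{d} \ne 0$ because $u^{d}$ is a unit. Thus no cancellation occurs in top $\sd_Z$-degree, giving $\sord_Z(K) = d/2 = \sord_{Z'}(K)$. The delicate point, and the step I expect to demand the most care, is precisely this control of remainders: one must verify that $\sd_Z$ raises $\sord_Z$ by at most $1/2$ and that invertibility of the leading coefficient persists through the induction, so that the top degree is never annihilated by cancellation. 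This in turn relies on the well-definedness of $d$, i.e.\ the $\shO_C$-linear independence of the $\sd_Z^{j}$, which follows from $\sd_Z^{2} = \pd_z$ and the standard $\shO_C$-basis $\{\pd_z^{k},\,\pd_z^{k}\pd_\zeta\}$ of $\shD_C$.
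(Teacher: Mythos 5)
Your argument is correct. The paper gives no proof of this statement---it is imported as a Fact from \cite[Chap.\ 2, 4.3]{M}---so there is nothing in the text to compare against; your route (writing $\sd_{Z'}=u\,\sd_Z$ with $u$ an even unit, since both are odd generators of the rank $0|1$ locally free sheaf $\shS$, then proving $\sd_{Z'}^{j}=u^{j}\sd_Z^{j}+R_j$ with $\sord_Z(R_j)<j/2$ by induction and using invertibility of $u^{j}$ to exclude cancellation in top degree) is the standard one and in fact yields the stronger conclusion $\sord_Z=\sord_{Z'}$. The two supporting points you single out do hold: $\sd_Z\cdot(a\,\sd_Z^{k})=(\sd_Z a)\sd_Z^{k}\pm a\,\sd_Z^{k+1}$ shows left multiplication by $\sd_Z$ raises $\sord_Z$ by at most $1/2$, and $\sd_Z^{2}=\pd_z$ together with the basis $\{\pd_z^{k},\pd_z^{k}\pd_\zeta\}$ shows $\{\sd_Z^{j}\}_{j\ge 0}$ is a left $\shO_C$-basis of $\shD_C$, so the leading exponent $d$ is well defined.
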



Next, we introduce the superdiagonal of a superconformal curve $(C,\shS)$.
We denote by
\begin{align}\label{eq:SC:D-U}
 \Delta\colon C \linj C^2, \quad j\colon U \linj C^2
\end{align}
the diagonal embedding and the open embedding of the complement.
Abusing notation, we denote the image of $\Delta$ by the same symbol as 
$\Delta \ceq \Delta(C) \subset C^2$.
We also denote the defining ideal of $\Delta \subset C^2$ by 
\[
 \shI_\Delta \subset \shO_{C^2}.
\]
Similarly as in the non-super case, we have 
$\Delta_*(\Omega_C^1) \cong \shI_\Delta/\shI_\Delta^2$ as $\shO_{C^2}$-modules.

We denote the first-order infinitesimal neighborhood of $\Delta$ by $\Delta^{(1)}$.
In other words, $\Delta^{(1)}$ is the closed sub-superscheme of $C^2$ 
defined by the ideal $\shI_\Delta^2$.
We have an exact sequence of $\shO_{C^2}$-modules
\[
 0 \lto \shI_\Delta/\shI_\Delta^2 \lto \shO_{\Delta^{(1)}} \lto \shO_\Delta \lto 0.
\]
Now, consider the composition $\Omega_C^1 \to \shS^\vee \cong \omega_C$ of 
the morphism $\Omega_C^1 \to \shS^\vee$ induced by the inclusion $\shS \inj \Theta_C$ 
and the isomorphism $\shS^\vee \cong\omega_C$ in \cref{lem:SC:SDer} \ref{i:SDer:20}.
We denote the kernel of the composition by 
\begin{align}\label{eq:SC:shK}
 \shK \ceq \Ker(\Omega^1_C \to \omega_C).
\end{align}
Then, we define a closed sub-superscheme $\Delta^{\tsc} \subset C^2$ by 
$\shO_{\Delta^{\tsc}} = \shO_{\Delta^{(1)}}/\shK$,
and denote the defining ideal by $\shI_{\Delta^{\tsc}} \subset \shO_{C^2}$.
Hence, we have
\[
 \shO_{\Delta^{\tsc}} = \shO_{\Delta^{(1)}}/\shK = \shO_{C^2}/\shI_{\Delta^{\tsc}}.
\]

\begin{dfn}\label{dfn:SC:Dsc}
We call the closed sub-superscheme $\Delta^{\tsc}$ 
the \emph{superdiagonal} of the superconformal curve $(C,\shS)$.
Abusing notation, we denote the closed embedding by $\Delta^{\tsc}\colon \Delta^{\tsc} \inj C^2$.
\end{dfn}

\begin{fct}[{\cite[Chap.\ 2, 6.3]{M}}]\label{fct:SC:Z}
Let $(C,\shS)$ and $\Delta^{\tsc}$ be as above.
Let $Z=(z,\zeta)$ be a superconformal coordinate of $(C,\shS)$, 
and put $Z_i = (z_i,\zeta_i) \ceq p_i^*(Z)$ for $i=1,2$, 
where $p_i\colon C^2 \to C$ are the projections.
Then $\Delta^{\tsc} \subset C^2$ is locally defined by the equation
\[
 z_1-z_2-\zeta_1\zeta_2=0.
\]
\end{fct}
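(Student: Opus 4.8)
The plan is to work entirely in a superconformal coordinate system and to make every sheaf occurring in the construction of $\Delta^{\tsc}$ explicit, so that the assertion reduces to a short computation in $\shO_{C^2}$ governed by the relations $\zeta_1^2=\zeta_2^2=0$. Since the claim is local, I fix a superconformal coordinate $Z=(z,\zeta)$ as in \cref{lem:SC:SDer}\,\ref{i:SDer:1} and set $z_i\ceq p_i^*(z)$, $\zeta_i\ceq p_i^*(\zeta)$. Then $\shI_\Delta$ is generated by $z_1-z_2$ and $\zeta_1-\zeta_2$, the neighborhood $\Delta^{(1)}$ is cut out by $\shI_\Delta^2$, and the identification $\Delta_*(\Omega_C^1)\cong\shI_\Delta/\shI_\Delta^2$ recalled before \cref{dfn:SC:Dsc} sends the local frame $dz,d\zeta$ to the classes of $z_1-z_2,\zeta_1-\zeta_2$.

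The next step is to identify $\shK$ locally. Because $\shS^\vee\cong\omega_C$ (\cref{lem:SC:SDer}\,\ref{i:SDer:20}) is an isomorphism, the sheaf $\shK=\Ker(\Omega_C^1\to\omega_C)$ of \eqref{eq:SC:shK} coincides with the kernel $\Ker\tau$ of \cref{lem:SC:SDer}\,\ref{i:SDer:2}, which is generated by the contact form $\vp=dz-\zeta d\zeta$. Transporting $\vp$ through the identification above and lifting the coefficient $\zeta$ of $d\zeta$ to $\zeta_1$, the relation $\zeta_1(\zeta_1-\zeta_2)=-\zeta_1\zeta_2$ (using $\zeta_1^2=0$) shows that the generator of $\shK\subset\shI_\Delta/\shI_\Delta^2$ is the class of $g\ceq z_1-z_2-\zeta_1\zeta_2$ in $\shO_{\Delta^{(1)}}=\shO_{C^2}/\shI_\Delta^2$, the precise sign and order of $\zeta_1\zeta_2$ being fixed by the chosen conventions for this identification and for $\vp$.

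Finally, by the definition $\shO_{\Delta^{\tsc}}=\shO_{\Delta^{(1)}}/\shK$, the ideal $\shI_{\Delta^{\tsc}}$ is the preimage in $\shO_{C^2}$ of $\shK$ under $\shO_{C^2}\srj\shO_{\Delta^{(1)}}$; since $\shK$ is generated by the class of $g$, this gives $\shI_{\Delta^{\tsc}}=\shI_\Delta^2+(g)$. It then remains to check $\shI_\Delta^2\subset(g)$, so that the single equation $g=0$ suffices. Writing $u=z_1-z_2$ and $w=\zeta_1\zeta_2$ (both even, with $w^2=0$), one gets $wg=uw$ and $g^2=u^2-2uw$, hence $u^2\in(g)$; and $(\zeta_1-\zeta_2)w=0$ gives $(\zeta_1-\zeta_2)g=u(\zeta_1-\zeta_2)$, so $(z_1-z_2)(\zeta_1-\zeta_2)\in(g)$, while $(\zeta_1-\zeta_2)^2=0$. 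Thus all generators $u^2,(z_1-z_2)(\zeta_1-\zeta_2),(\zeta_1-\zeta_2)^2$ of $\shI_\Delta^2$ lie in $(g)$, so $\shI_{\Delta^{\tsc}}=(g)$ and $\Delta^{\tsc}$ is locally defined by $z_1-z_2-\zeta_1\zeta_2=0$.

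The main obstacle is precisely this last verification: keeping the super signs and nilpotency relations straight to confirm that the ambient second-order data $\shI_\Delta^2$ is entirely absorbed into $(g)$, so that $\Delta^{\tsc}$ is genuinely a hypersurface cut out by one equation rather than a thickening requiring extra generators.
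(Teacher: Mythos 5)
The paper offers no proof of this statement at all: it is recorded as a \emph{Fact} with a citation to Manin, so your direct verification is by construction a different (and more informative) route. Your argument is sound in structure and in substance: the reduction to identifying $\shK$ inside $\shI_\Delta/\shI_\Delta^2 \cong \Delta_*\Omega_C^1$, the observation that $\shK=\Ker\tau$ is generated by the contact form $\vp$ so that $\shI_{\Delta^{\tsc}}=\shI_\Delta^2+(g)$ with $g=z_1-z_2-\zeta_1\zeta_2$, and the final absorption $\shI_\Delta^2\subset(g)$ are all correct. Your closing computation is in fact exactly the identity the paper itself records just after the Fact to justify \eqref{eq:SC:ID-IDs}, namely $(z_1-z_2)^2=(z_1-z_2-\zeta_1\zeta_2)(z_1-z_2+\zeta_1\zeta_2)$, supplemented by $(\zeta_1-\zeta_2)^2=0$ and $(\zeta_1-\zeta_2)\zeta_1\zeta_2=0$; so the ``main obstacle'' you identify is genuinely the content that makes $\Delta^{\tsc}$ a hypersurface. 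The one soft spot is the sign of $\zeta_1\zeta_2$, which you defer to ``conventions'': this is not entirely cosmetic, since $(z_1-z_2+\zeta_1\zeta_2)$ and $(z_1-z_2-\zeta_1\zeta_2)$ are genuinely distinct ideals (the superdiagonal is not invariant under the flip of factors), and with the most common normalizations ($df\mto f_1-f_2$ and left coefficients, so $\zeta\,d\zeta\mto\zeta_1(\zeta_1-\zeta_2)=-\zeta_1\zeta_2$) your computation as written lands on $z_1-z_2+\zeta_1\zeta_2=z_1-z_2-\zeta_2\zeta_1$. The discrepancy is resolved by the equally common choice of writing the odd coefficient on the right, $dz-d\zeta\cdot\zeta$, since $\zeta\,d\zeta=-d\zeta\,\zeta$; a fully self-contained proof should fix one such convention and carry it through, but this does not affect the substance of your argument, and the paper itself sidesteps the issue entirely by citation.
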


Thus, $\Delta^{\tsc} \subset C^2$ is a closed subscheme of codimension $1|0$, and the underlying 
topological space of $\Delta^{\tsc}$ is the set-theoretic diagonal $\ul{\Delta} \subset \ul{C}^2$.
Also, for the defining ideals $\shI_{\Delta},\shI_{\Delta^{\tsc}} \subset \shO_{C^2}$, we have
\begin{align}\label{eq:SC:ID-IDs}
 \shI_{\Delta}^2 \subset \shI_{\Delta^{\tsc}} \subset \shI_{\Delta}.
\end{align}
In fact, $\shI_\Delta$ is locally generated by $z_1-z_2$ and $\zeta_1-\zeta_2$,
and the relation $\shI_{\Delta^{\tsc}} \subset \shI_{\Delta}$ 
follows from $\zeta_1\zeta_2=(\zeta_1-\zeta_2)\zeta_2$.
The other relation follows from $(z_1-z_2)^2=(z_1-z_2-\zeta_1\zeta_2)(z_1-z_2+\zeta_1\zeta_2)$,

Recalling the morphism $\Omega_C^1 \to \omega_C$ considered in \eqref{eq:SC:shK}, 
we denote the composition of it with the de Rham differential $d\colon \shO_C \to \Omega_C^1$ by
\begin{align}\label{eq:SC:delta_Z}
 \delta\colon \shO_C \lto \omega_C.
\end{align}
In a superconformal coordinate system $Z=(z,\zeta)$, it is given by 
\[
 f \lmto \delta f = dZ \cdot \sd_Z f, \quad \sd_Z = \pd_\zeta+\zeta\pd_z,
\]
where $dZ$ is the local generator of the $\shO_C$-module $\omega_C$.
Hence, $\delta$ is an odd derivation.

%
%

\subsection{\texorpdfstring{$\shD$}{D}-modules on superconformal curves}\label{ss:SC:SCD}

Let $(C,\shS)$ be a superconformal curve, and $\Delta^{\tsc}\subset C^2$ the superdiagonal.
We denote by $U^{\tsc} \subset C^2$ the complement of $\Delta^{\tsc}$,
which is locally given by $z_1-z_2-\zeta_1\zeta_2 \ne 0$ in a superconformal coordinate
$Z_i=(z_i,\zeta_i) \ceq p_i^*(Z)$, $Z=(z,\zeta)$.
Abusing the notation, we denote the closed embedding of $\Delta^{\tsc}$ 
and the open embedding of $U^{\tsc}$ by 
\begin{align}\label{eq:SC:D-U-sc}
 \Delta^{\tsc}\colon \Delta^{\tsc} \linj C^2, \quad 
 j^{\tsc}\colon U^{\tsc} \linj C^2
\end{align}
Now, consider the direct image $\Delta^{\tsc}_*\shM$ of a right $\shD_C$-module $\shM$.
From the definition \eqref{eq:SC:f_*}, 
in a superconformal coordinate $Z=(z,\zeta)$, it is locally given by
$\Delta_{\cdot}\shM \otimes_{\bbC[\sd_Z]} \bbC[\sd_{Z_1},\sd_{Z_2}]$, 
where $\Delta_{\cdot}$ denotes the $\shO$-module direct image under the ordinary diagonal map $\Delta$.
The action of $\sd_Z$ on $\bbC[\sd_{Z_1},\sd_{Z_2}]$ is given by $\sd_{Z}=\sd_{Z_1}+\sd_{Z_2}$.
Similarly, the direct image $\Delta^s_*\shL$ of a left $\shD_C$-module $\shL$
is locally given by $\bbC[\sd_{Z_1},\sd_{Z_2}] \otimes_{\bbC[\sd_Z]} \Delta_{\cdot}\shL$.

We can rewrite these direct images using the formal delta function \cite[(1.6.1)]{HK}.
Let $Z_1$ and $Z_2$ be as above, and consider the formal distribution 
\[
 \delta(Z_1,Z_2) \ceq (i_{z_1,z_2}-i_{z_2,z_1})\frac{\zeta_1-\zeta_2}{z_1-z_2},
\]
where $i_{z_1,z_2}$ denotes the expansion in the domain $\abs{z_1}>\abs{z_2}$,
and similar for $i_{z_2,z_1}$.
By \cite[3.1.4, Lemma 4.2]{HK}, we have 
\[
 (z_1-z_2-\zeta_1\zeta_2)\delta(Z_1,Z_2) = (\sd_{Z_1}+\sd_{Z_2})\delta(Z_1,Z_2) = 0.
\]
Thus, the direct image of the left $\shD$-module $\shO_C$ under $\Delta^{\tsc}$ 
can be identified with the left $\shD$-module generated by $\delta(Z_1,Z_2)$:
\begin{align}\label{eq:SC:DO=Dd}
 \Delta^{\tsc}_*\shO_C \cong \shD_{C^2} \cdot \delta(Z_1,Z_2).
\end{align}
Similarly, we have an isomorphism of right $\shD$-modules
\begin{align}\label{eq:SC:Do=dD}
 \Delta^{\tsc}_*\omega_C \cong \delta(Z_1,Z_2)dZ_1dZ_2 \cdot \shD_{C^2}.
\end{align}

Following \cite[(4.1.1), (4.1.5)]{HK}, for $j \in \bbZ$ and $J \in \{0,1\}$, we denote
\[
 (Z_1-Z_2)^{j|J} \ceq (z_1-z_2-\zeta_1\zeta_2)^j(\zeta_1-\zeta_2)^J, \quad 
 \sd_Z^{(j|J)} \ceq (-1)^{J(J+1)/2}\tfrac{1}{j!} \pd_z^j \sd_Z^J,
\]
where we understand 
$(Z_1-Z_2)^{-1|0} = (z_1-z_2-\zeta_1\zeta_2)^{-1} = (z_1-z_1)^{-1}(1+\zeta_1\zeta_2)$,
and similarly for general $j<0$.
Then, similarly as in the non-super case \cite[19.1.5]{FB}, the correspondences
\begin{align*}
 (Z_1-Z_2)^{-j-1|J}         &\lmto \sd_{Z_2}^{(j|J)}\delta(Z_1,Z_2), \\
 (Z_1-Z_2)^{-j-1|J}dZ_1dZ_2 &\lmto \sd_{Z_2}^{(j|J)}\delta(Z_1,Z_2) \cdot dZ_1dZ_2
\end{align*}
for $j \in \bbN$ and $J \in \{0,1\}$ induce an isomorphism of left (resp.\ right) $\shD$-module
\begin{align*}
 (\shO_C \boxtimes   \shO_C)(\infty\Delta^{\tsc})/(\shO_C \boxtimes \shO_C) 
 &\lsto \shD_{C^2}\cdot\delta(Z_1,Z_2), \\
 (\omega_C \boxtimes \omega_C)(\infty\Delta^{\tsc})/(\omega_C \boxtimes \omega_C)  
 &\lsto \delta(Z_1,Z_2)dZ_1dZ_2\cdot\shD_{C^2}.
\end{align*}
Here $\boxtimes$ denotes the exterior product with respect to the projections $p_1,p_2\colon C^2 \to C$.
Also, for an $\shO_{C^2}$-module $\shF$, we denote the sheaf of meromorphic sections of $\shF$
with poles along $\Delta^{\tsc}$ of arbitrary order by
\begin{align}\label{eq:inf-Delta-sc}
 \shF(\infty\Delta^{\tsc}) \ceq \lim_n \shHom_{\shO_{C^2}}(\shI_{\Delta^{\tsc}}^n,\shF).
\end{align}
Combining these isomorphisms with \eqref{eq:SC:DO=Dd} and \eqref{eq:SC:Do=dD}, we have 
\begin{align}\label{eq:SC:rDs_*om}
 \Delta^{\tsc}_*\shO_C \cong (\shO_C \boxtimes \shO_C)(\infty\Delta^{\tsc})
 /(\shO_C \boxtimes \shO_C), \quad
 \Delta^{\tsc}_*\omega_C \cong (\omega_C \boxtimes \omega_C)(\infty\Delta^{\tsc})
 /(\omega_C \boxtimes \omega_C).
\end{align}
These isomorphisms can be generalized in a straightforward manner as: 
 
\begin{lem}[{c.f.\ \cite[4.1.2]{H}}]
For a quasi-coherent left $\shD_C$-module $\shL$, we have
\begin{align*}
 \Delta^{\tsc}_*\shL \cong (\shO_C \boxtimes \shL)(\infty\Delta^{\tsc})/(\shO_C \boxtimes \shL).
\end{align*}
as left $\shD_{C^2}$-modules.
Also, for a quasi-coherent right $\shD_C$-module $\shM$, we have
\begin{align}\label{eq:SC:rDs_*}
 \Delta^{\tsc}_*\shM \cong (\omega_C \boxtimes \shM)(\infty\Delta^{\tsc})/(\omega_C \boxtimes \shM).
\end{align}
\end{lem}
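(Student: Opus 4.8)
The plan is to upgrade the two isomorphisms in \eqref{eq:SC:rDs_*om}, which are precisely the cases $\shL=\shO_C$ and $\shM=\omega_C$, to arbitrary coefficients by running the same argument with $\shM$ (resp.\ $\shL$) in place of $\omega_C$ (resp.\ $\shO_C$). Since both sides of \eqref{eq:SC:rDs_*} are functorial in $\shM$ and the asserted isomorphism will be natural, it suffices to construct it in a superconformal coordinate system and to check independence of that choice; the local isomorphisms then glue. I would treat the right-module statement \eqref{eq:SC:rDs_*}, the left-module statement following by the identical computation with $\shO_C$ replacing $\omega_C$, or via the left--right equivalence \eqref{eq:D:L=R} together with the compatibility of $\Delta^{\tsc}_*$ with the $\omega$-twist.

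First I would fix a superconformal coordinate $Z=(z,\zeta)$ and invoke the local description of the direct image recalled just before \eqref{eq:SC:DO=Dd}, namely $\Delta^{\tsc}_*\shM \cong \Delta_{\cdot}\shM \otimes_{\bbC[\sd_Z]}\bbC[\sd_{Z_1},\sd_{Z_2}]$ with $\sd_Z$ acting through $\sd_{Z_1}+\sd_{Z_2}$. Using the relation $\sd_{Z_1}=\sd_Z-\sd_{Z_2}$ to transfer the right $\shD_C$-action of $\sd_Z$ onto the $\shM$-factor, every local section reduces to a unique finite sum $\sum_{j\in\bbN,\,J\in\{0,1\}} m_{j,J}\otimes \sd_{Z_2}^{(j|J)}$ with $m_{j,J}\in\shM$, exactly as the delta-function basis $\{\sd_{Z_2}^{(j|J)}\delta(Z_1,Z_2)\}$ was isolated in the case $\shM=\omega_C$. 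This presents $\Delta^{\tsc}_*\shM$ as the $\omega_C$-computation with its second $dZ$-coefficient replaced by a section of $\shM$ (the first $\omega_C$-factor, i.e.\ $dZ_1$, being the $\omega$-twist inherent to right $\shD$-module direct image).

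On the other side I would use that $\Delta^{\tsc}\subset C^2$ is locally principal of codimension $1|0$, cut out by the even non-zero-divisor $z_1-z_2-\zeta_1\zeta_2$ (\cref{fct:SC:Z}). Hence $\omega_C\boxtimes\shM$ is torsion-free along $\Delta^{\tsc}$, and by the pole-order filtration coming from \eqref{eq:inf-Delta-sc} each class of $(\omega_C\boxtimes\shM)(\infty\Delta^{\tsc})/(\omega_C\boxtimes\shM)$ has a unique representative $\sum_{j,J}(Z_1-Z_2)^{-j-1|J}\,dZ_1\otimes m_{j,J}$. I would then define the comparison map by the same rule as in the $\omega_C$ case,
\begin{align*}
 \sum_{j,J}(Z_1-Z_2)^{-j-1|J}\,dZ_1\otimes m_{j,J}
 \;\lmto\;
 \sum_{j,J}\sd_{Z_2}^{(j|J)}\delta(Z_1,Z_2)\,dZ_1\otimes m_{j,J},
\end{align*}
and check it is $\shD_{C^2}$-linear using $(z_1-z_2-\zeta_1\zeta_2)\delta(Z_1,Z_2)=(\sd_{Z_1}+\sd_{Z_2})\delta(Z_1,Z_2)=0$ together with the right $\shD_C$-action on the coefficients $m_{j,J}$.

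The main obstacle I expect is concentrated in the last step, and is twofold. First, one must verify that the right $\shD_C$-module structure of $\shM$---which enters only through the identification $\sd_Z=\sd_{Z_1}+\sd_{Z_2}$ in the tensor product over $\bbC[\sd_Z]$---is translated faithfully into the $\shD_{C^2}$-action on meromorphic-modulo-regular sections, so that the displayed map is $\shD_{C^2}$-linear and not merely $\shO_{C^2}$-linear. Second, one must confirm independence of the superconformal coordinate $Z$; this rests on \cref{fct:SC:sord} and \cref{fct:SC:Z}, which guarantee that both the superconformal order filtration and the superdiagonal $\Delta^{\tsc}$ (hence the pole-order filtration on the right-hand side) are coordinate-free. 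Once the symbols $\sd_{Z_2}^{(j|J)}\delta(Z_1,Z_2)$ and the poles $(Z_1-Z_2)^{-j-1|J}$ are matched in one coordinate, naturality then forces the global isomorphism.
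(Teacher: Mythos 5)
Your proposal is correct and follows essentially the route the paper intends: the paper offers no written proof, merely asserting that the coordinate/delta-function computation establishing \eqref{eq:SC:rDs_*om} for $\shO_C$ and $\omega_C$ "generalizes in a straightforward manner," and your argument is precisely that generalization, matching the basis $\sd_{Z_2}^{(j|J)}$ of $\Delta^{\tsc}_*\shM$ against the pole-order representatives $(Z_1-Z_2)^{-j-1|J}$ with coefficients in $\shM$. The only cosmetic caveat is that the injectivity of multiplication by $z_1-z_2-\zeta_1\zeta_2$ on $\omega_C\boxtimes\shM$ comes from expanding in the first variable (so it holds for arbitrary quasi-coherent $\shM$), not from any torsion-freeness of $\shM$ itself.
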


Let $\mu_\omega$ be the composition of the projection and 
the second isomorphism in \eqref{eq:SC:rDs_*om}:
\begin{align}\label{eq:SC:mu-om-sc}
 \mu_\omega^{\tsc}\colon (\omega_C \boxtimes \omega_C)(\infty\Delta^{\tsc}) 
 \xr{\bmod \omega_C \boxtimes \omega_C}
 (\omega_C \boxtimes \omega_C)(\infty\Delta^{\tsc})/(\omega_C \boxtimes \omega_C) \lsto 
 \Delta^{\tsc}_*\omega_C.
\end{align}
By \cite[4.1.7, 4.1.10]{H}, this gives a \emph{chiral algebra} structure on $\omega_C$.
That is, $\mu_\omega^{\tsc}$ satisfies the skew-symmetry and the Jacobi identity (in the chiral setting).

For later reference, let us modify the construction of $\mu_\omega^{\tsc}$ by using 
the ordinary diagonal embedding $\Delta\colon C \inj C^2$ \eqref{eq:SC:D-U} instead of $\Delta^{\tsc}$.
Then, similarly as \eqref{eq:inf-Delta-sc}, denoting 
the sheaf of meromorphic sections with poles along $\Delta$ by 
\[
 \shF(\infty\Delta) \ceq \lim_n \shHom_{\shO_{C^2}}(\shI_{\Delta}^n,\shF),
\]
we obtain
\begin{align}\label{eq:SC:mu-omega}
 \mu_\omega\colon (\omega_C \boxtimes \omega_C)(\infty\Delta) 
 \xr{\bmod \omega_C \boxtimes \omega_C}
 (\omega_C \boxtimes \omega_C)(\infty\Delta)/(\omega_C \boxtimes \omega_C) \lsto 
 \Delta_*\omega_C.
\end{align}
It is also a chiral algebra structure on $\omega_C$, but 
this structure exists for any smooth supercurve $C$ of dimension $1|1$ (not a superconformal one).

Finally, we note the right $\shD$-module isomorphisms 
\begin{align}\label{eq:SC:jj}
 (\omega_C \boxtimes \shM)(\infty\Delta)/(\omega_C \boxtimes \shM) \cong j_*j^*\shM, \quad 
 (\omega_C \boxtimes \shM)(\infty\Delta^{\tsc})/(\omega_C \boxtimes \shM) \cong j^{\tsc}_*{j^{\tsc}}^*\shM
\end{align}
for a right $\shD_C$-module $\shM$, which hold since we are working in the algebraic setting 
(in the complex-analytic setting, these do not hold).
In the literature \cite{BD,KV04,H}, chiral products $\mu$ are defined using these expressions.

\section{Formal loop space}\label{s:L}

We give a brief review of the formal loop spaces in the super setting, 
following \cite[\S1, \S2]{KV04} and \cite[\S4, \S5]{KV11}.

We continue to work over $\bbC$. 
In particular, linear superspaces, superalgebras and superschemes 
are defined over $\bbC$ unless otherwise stated.
We denote by $\SSch=\SSch_{\bbC}$ the category of superschemes over $\bbC$, 
which is a full subcategory of the category $\SSp=\SSp_{\bbC}$ of superspaces
(see \cite[\S1.1]{KV11} for the detail).
We also use the language of \emph{ind-superschemes}. See \cite[\S3]{KV11} for the foundation.

\subsection{Formal jet space, formal loop space, and the de Rham spectrum}\label{ss:L:JLS}

For a supercommutative algebra $R$ and an even indeterminate $t$, we denote by $R\dbr{t}$ 
the commutative superalgebra of formal power series in $t$ with coefficients in $R$.
For a sheaf $\shO$ of supercommutative algebras over some topological space, 
we denote by $\shO\dbr{z}$ the sheaf given by the correspondence 
$U \mapsto \shO(U)\dbr{t}$ for each open set $U$.
If $\shO$ is a sheaf of local supercommutative algebras,
then so is $\shO\dbr{t}$ by \cite[Lemma 4.1.1]{KV11}.

For a superscheme $X$, we denote by $\clJ X$ the (formal) \emph{jet space} of $X$ 
\cite[Proposition 4.2.1]{KV11}.
It is a superscheme representing the functor
\begin{align*}
 \SSch \lto \cSet, \quad S=(\ul{S},\shO_S) \lmto \Hom_{\SSp}((\ul{S},\shO_S\dbr{t}),X).
\end{align*} 
In particular, we have a morphism of superschemes
\begin{align}\label{eq:L:pJ}
 p_{\clJ}\colon \clJ X \lto X
\end{align}
corresponding to the surjection $\shO_X\dbr{t} \to \shO_X \cong \shO_X\dbr{t}/(t)$.
By \cite[Proposition 1.2.1 (d)]{KV04}, the morphism $p$ is affine.
We call $p$ the projection.
The construction $X \mto \clJ X$ is functorial.
If $X$ is a scheme, then $\clJ X$ is the scheme of jets (or arcs) in $X$.
We refer to \cite[\S1.3.1]{Y} for an explicit description of $\clJ X$ 
in the case where $X$ is an affine superscheme.
See also \cite[2.3.2, 2.3.3]{BD} for the functor $\clJ$ (in the non-super setting).

Next, for a supercommutative algebra $R$, we denote by $R\dpr{t}$ 
the supercommutative algebra of formal Laurent series $r(t)=\sum_{i\gg-\infty}^\infty r_it^i$
with even generator $t$ and coefficients $r_i \in R$, 
and by $R\dpr{t}^{\sqr}$ the sub-superalgebra of $R\dpr{t}$ consisting of the series $r(t)$
such that $r_i$ is nilpotent for any $i<0$.
By \cite[Corollary 1.3.2]{KV04}, if $R$ is a local commutative superalgebra,
then so is $R\dpr{t}^{\sqr}$.
For a sheaf $\shO$ of commutative superalgebras, we denote by $\shO\dpr{t}^{\sqr}$
the sheaf given by the correspondence $U \mapsto \shO(U)\dpr{t}^{\sqr}$.
If $\shO$ is a sheaf of local commutative superalgebras, then so is $\shO\dpr{t}^{\sqr}$

For a superscheme $X$ of finite type (over $\bbC$) \cite[\S1.1, p.1083]{KV11}, 
we denote by $\clL X$ the \emph{formal loop space} of $X$ \cite[(4.2.2), Proposition 4.2.3]{KV11}.
It is an ind-superscheme representing the functor
\begin{align*}
 \SSch \lto \cSet, \quad S \lmto \Hom_{\SSp}((\ul{S},\shO_S\dpr{t}^{\sqr}),X).
\end{align*}
More precisely, $\clL X$ is an ind-object in the category $\SSch$ represented as 
a filtering inductive limit of nilpotent extensions of the jet space $\clJ X$.
Hence, we have a diagram of ind-superschemes 
\begin{equation*}
\begin{tikzcd}
 X & \clJ X \ar[l,"p_{\clJ}"'] \ar[r,"i"] & \clL X 
\end{tikzcd}
\end{equation*}
with $p_{\clJ}$ the projection \eqref{eq:L:pJ} and $i$ a closed embedding.
Also, the construction $X \mto \clL X$ is functorial.

Next, we turn to the de Rham spectrum.
For an odd indeterminate $\tau$, we denote by $\Lambda[\tau]=\Lambda_{\bbC}[\tau]$ 
the exterior algebra (over $\bbC$) with one generator $\tau$.
Thus, it is the supercommutative algebra and equal to $\bbC+\bbC \tau$ as a $\bbC$-module.
The corresponding spectrum is the $0|1$-dimensional affine superspace \eqref{eq:SC:Apq}: 
$\Spec \Lambda[\tau] = \bbA^{0|1}$. 
For a superscheme $S=(\ul{S},\shO_S)$, the fiber product 
$S \times \bbA^{0|1} = S \times_{\bbC} \bbA^{0|1}$ is isomorphic to 
$(\ul{S},\shO_S \otimes \Lambda[\tau])$ by \cite[Proposition 2.1.2 (a)]{KV11}.

For a superscheme $X$, we denote by $\clS X$ the \emph{de Rham spectrum} of $X$ \cite[\S2.2]{KV11}.
It is a superscheme representing the functor 
\begin{align*}
 \SSch \lto \cSet, \quad S \lmto  \Hom_{\SSch}(\bbA^{0|1} \times S,X).
\end{align*}
The de Rham spectrum $\clS X$ has an explicit description \cite[Proposition 2.2.3]{KV11}.
To explain it, let $\Omega_X^1$ be the sheaf of K\"ahler differentials on $X$ \eqref{eq:SC:Omega},
regarded as an object with even parity and cohomological degree $0$.
Then, the \emph{de Rham superalgebra} on $X$ is defined to be 
\begin{align}\label{eq:L:OmegaX}
 \Omega_X \ceq \shSym_{\shO_X}(\Pi\Omega_X^1).
\end{align}
Here $\shSym_{\shO_X}$ denotes the symmetric $\shO_X$-algebra (functor), 
and $\Pi$ denotes the parity change functor. 
Thus, as a sheaf of modules, it is the sheaf of differential forms on $X$.
The unique closed point of $\bbA^{0|1}$ (the origin) determines a morphism 
\begin{align}\label{eq:L:pS}
 p_{\clS}\colon \clS X \lto X
\end{align}
called the projection. 
Now, by \cite[Proposition 2.2.3]{KV11}, we have an isomorphism of $\shO_X$-modules 
\begin{align}\label{eq:L:vpO=Om}
 {p_{\clS}}_*\shO_{\clS X} \cong \Omega_X.
\end{align}
This is the description of $\clS X$ mentioned above.

Moreover, the identification \eqref{eq:L:vpO=Om} holds as a commutative dg $\shO_X$-algebras
\cite[Proposition 2.2.2]{KV11}.
Here the dg (differential graded) structure on $\Omega_X$ is given by the de Rham differential $d$
and the degree of differential forms (see \eqref{eq:SC:deRham}).
The dg structure on ${p_{\clS}}_*\shO_{\clS X}$ is induced by the action of 
the automorphism group superscheme $\Aut(\bbA^{0|1})$ 
(denoted by $\ul{\Aut}(\bbA^{0|1})$ in \cite{KV11}),
due to the observation of Kontsevich.


Then next lemma can be proved easily using the characterization of 
smoothness of a superscheme \cite[Proposition A.17]{BHP}.

\begin{lem}
Let $X$ be a (non-super) smooth scheme of finite type.
Then, the de Rham spectrum $\clS X$ is a smooth superscheme of finite type,
and the sheaf $\Omega^1_{\clS X}$ is locally described as 
\begin{align*}
 \rst{\Omega_{\clS X}^1}{U} \cong 
 \bigoplus_{i=1}^n \shO_{\clS U} \cdot dx_i \oplus 
 \bigoplus_{i=1}^n \shO_{\clS U} \cdot d\xi_i,
\end{align*}
where $U \subset X$ is an open subscheme equipped with a local coordinate system $x_1,\dotsc,x_n$,
and $\xi_i \in \shO_{\clS X}(U)_{\od}$ is the element corresponding to 
$dx_i \in \Omega_X^1(U) =  \Omega_X^1(U)_{\ev}$ under the parity change $\Pi$..
\end{lem}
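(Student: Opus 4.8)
The plan is to argue Zariski-locally: smoothness, finite type, and the local freeness of $\Omega^1_{\clS X}$ are all local conditions, and since the construction $\clS$ is functorial the local pieces glue automatically. I may thus replace $X$ by an affine open $U$ equipped with a local coordinate system $x_1,\dotsc,x_n$, so that $\Omega^1_U=\bigoplus_{i=1}^n\shO_U\,dx_i$ is free of rank $n$ and the morphism $f\colon U\to\bbA^n$ defined by the $x_i$ is étale.

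First I would determine the structure sheaf of $\clS U$. By \eqref{eq:L:vpO=Om} one has ${p_{\clS}}_*\shO_{\clS U}\cong\Omega_U=\shSym_{\shO_U}(\Pi\Omega^1_U)$, and the one algebraic input needed is that the symmetric $\shO_U$-superalgebra on a purely odd free module is its exterior algebra. Setting $\xi_i\ceq\Pi(dx_i)$, this gives
\[
 {p_{\clS}}_*\shO_{\clS U}\cong\shO_U\otimes_{\bbC}\Lambda[\xi_1,\dotsc,\xi_n];
\]
since $\clS X$ is affine over $X$, the corresponding relative spectrum is $\clS U\cong U\times\bbA^{0|n}$.

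Next I would deduce smoothness using \cite[Proposition A.17]{BHP}. The even functions $x_1,\dotsc,x_n$ and the odd functions $\xi_1,\dotsc,\xi_n$ together define a morphism $\clS U\to\bbA^{n|n}$ equal to $f\times\id_{\bbA^{0|n}}$, i.e.\ the base change of the étale morphism $f$ along the projection $\bbA^{n|n}\to\bbA^n$; it is therefore étale, so $(x_1,\dotsc,x_n,\xi_1,\dotsc,\xi_n)$ is a local coordinate system on $\clS U$ in the sense of \eqref{eq:SC:Z}. By the cited characterization of smoothness this shows $\clS X$ is a smooth superscheme of dimension $n|n$, and it is of finite type since $X$ is. The description of the cotangent sheaf is then immediate from the defining property of a local coordinate system: the differentials $dx_1,\dotsc,dx_n,d\xi_1,\dotsc,d\xi_n$ form a basis of $\Omega^1_{\clS U}$, which is precisely the asserted decomposition, with $\xi_i$ the odd element corresponding to $dx_i$ under $\Pi$.

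The computation is essentially bookkeeping, and I expect no serious obstacle, consistent with the remark preceding the lemma that it follows easily from \cite[Proposition A.17]{BHP}. The only two points deserving a line of justification are the identification of $\shSym$ of an odd module with an exterior algebra, and the stability of étaleness under base change in the super setting; both are standard facts of super algebraic geometry.
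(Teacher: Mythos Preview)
Your proposal is correct and follows exactly the approach the paper indicates: the paper does not give a detailed proof but merely remarks that the lemma ``can be proved easily using the characterization of smoothness of a superscheme \cite[Proposition A.17]{BHP},'' and your argument---identifying $\clS U\cong U\times\bbA^{0|n}$ via \eqref{eq:L:vpO=Om}, producing the \'etale map to $\bbA^{n|n}$ by base change, and invoking the cited criterion---is precisely the intended route. The two auxiliary facts you flag (Sym of a purely odd free module is the exterior algebra, and \'etaleness is stable under base change in the super setting) are indeed standard and require no further justification here.
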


\subsection{Formal superloop space and the chiral de Rham complex}

Let $T=(t,\tau)$ be a pair of an even determinate $t$ and an odd determinate $\tau$,
which will be called a \emph{supervariable}. 
We denote the polynomial superalgebra of supervariable $T$ over $\bbC$ by 
\begin{align}\label{eq:L:CT}
 \bbC \dbr{T} \ceq \bbC\dbr{t}[\tau] = \bbC\dbr{t} \otimes_{\bbC} \Lambda[\tau].
\end{align}

Let $X$ be a superscheme.
We denote by $\clJ\clS$ the composition of the functors $\clJ$ and $\clS$.
By \cite[Proposition 4.2.7]{KV11}, the superscheme $\clJ\clS X$ represents the functor
\begin{align*}
 \SSch \lto \cSet, \quad S \lmto \Hom_{\SSp}((\ul{S},\shO_S\dbr{T}), X),
\end{align*}
where $\shO_S\dbr{T} \ceq \shO_S \otimes_{\bbC} \bbC\dbr{T}$.
In particular, the functors $\clJ$ and $\clS$ commute: $\clJ\clS X = \clS\clJ X$.
We have a morphism of superschemes
\begin{align}\label{eq:L:pJS}
 p_{\clJ\clS}\colon \clJ\clS X \lto X.
\end{align}
We call $\clJ\clS X$ the \emph{formal superjet space} of $X$.

Next, let $X$ be a superscheme of finite type. 
We denote by $\LS$ the composition of the functors $\clL$ and $\clS$.
By \cite[Proposition 4.2.7]{KV11}, the ind-superscheme $\LS X$ represents the functor 
\begin{align}\label{eq:L:pLS}
 \SSch \lto \cSet, \quad S \lmto \Hom_{\SSp}((\ul{S},\shO_S\dpr{t}^{\sqr}[\tau]),X),
\end{align}
and equipped with a morphism
\[
 p_{\LS}\colon \LS X \lto X.
\]
We call $\LS X$ the \emph{formal superloop space} of $X$.

We recall some basic properties of $\JS X$ and $\LS X$.

\begin{lem}\label{prp:L:LSNX}
Let $X$ be a superscheme of finite type. 
\begin{enumerate}
\item 
The ind-superscheme $\LS X$ is an inductive limit of nilpotent extensions 
of the superscheme $\JS X$.

\item
We have a diagram of ind-superschemes 
\begin{equation}\label{diag:L:SJSLS}
\begin{tikzcd}
 X & \clS X \ar[l,"p_{\clS}"'] &
 \clJ\clS X \ar[l,"p_{\clJ}"'] \ar[r,"i"] & \LS X 
\end{tikzcd}
\end{equation}
where $p_{\clJ}$ and $p_{\clS}$ are given by \eqref{eq:L:pJ} and \eqref{eq:L:pS}.
We also have $p_{\clS}p_{\clJ}=p_{\JS}=p_{\LS}i\colon \JS X \to X$.

\item
For an open subscheme $U \subset X$, we have $\LS U \cong \rst{\LS X}{p_{\LS}^{-1}(U)}$.
\end{enumerate}
\end{lem}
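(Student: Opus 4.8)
The plan is to prove the three statements of \cref{prp:L:LSNX} by reducing everything to the defining functorial descriptions of the spaces involved, combined with the non-super theory of \cite{KV04} and the factorization $\LS = \clL \circ \clS$, $\JS = \clJ \circ \clS$ through the de Rham spectrum. The guiding principle is that $\clS$ is represented by the finite affine superscheme $\bbA^{0|1}$, so that adjoining $\clS$ to a loop/jet space amounts to base-changing the coefficient algebra by $\Lambda[\tau]$; every property of $\clL X$ and $\clJ X$ established in \cite{KV04} should then propagate to $\LS X$ and $\JS X$ with only bookkeeping of the extra odd variable.

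For \textbf{(1)}, I would argue as follows. By the functorial description \eqref{eq:L:pLS}, the ind-superscheme $\LS X$ represents $S \mapsto \Hom_{\SSp}((\ul{S}, \shO_S\dpr{t}^{\sqr}[\tau]), X)$, while $\JS X$ represents $S \mapsto \Hom_{\SSp}((\ul{S}, \shO_S\dbr{t}[\tau]), X)$. The inclusion $\shO_S\dbr{t}[\tau] \subset \shO_S\dpr{t}^{\sqr}[\tau]$ of coefficient algebras induces the closed embedding $i\colon \JS X \inj \LS X$, exactly as in the non-super case $\clJ X \inj \clL X$ recalled from \cite[Proposition 4.2.3]{KV11}. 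The key point is that $\LS X$ is, by construction, the filtering inductive limit of nilpotent thickenings of $\JS X$: since $\clS$ is represented by the finite superscheme $\bbA^{0|1} = \Spec\Lambda[\tau]$, the functor $\clS$ commutes with the inductive limit presenting $\clL$ as a colimit of nilpotent extensions of $\clJ$ (as stated for the non-super loop space in \cref{prp:L:LSNX}'s analogue in \S\ref{ss:L:JLS}), so $\LS X = \clL(\clS X)$ inherits the same ind-scheme structure built over $\JS X = \clJ(\clS X)$. I would record this by applying the non-super statement to the superscheme $\clS X$ in place of $X$.

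For \textbf{(2)}, the diagram \eqref{diag:L:SJSLS} is assembled from the already-constructed projections: $p_{\clS}\colon \clS X \to X$ from \eqref{eq:L:pS}, the jet projection $p_{\clJ}\colon \clJ\clS X \to \clS X$ from \eqref{eq:L:pJ} applied to $\clS X$, and the embedding $i$ from part (1). The identities $p_{\clS}p_{\clJ} = p_{\JS} = p_{\LS}i$ I would verify on the representing functors: each composite corresponds, under the Yoneda description, to the same quotient map on coefficient algebras, namely the augmentation sending $t \mapsto 0$ (for the jet direction) and the augmentation $\Lambda[\tau] \to \bbC$ sending $\tau\mapsto 0$ (for the de Rham-spectrum direction). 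Chasing these augmentations through \eqref{eq:L:pJS} and \eqref{eq:L:pLS} gives the desired equalities with no geometric input beyond functoriality.

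For \textbf{(3)}, the locality statement $\LS U \cong \rst{\LS X}{p_{\LS}^{-1}(U)}$ I would deduce from the corresponding locality of the formal loop space, namely \cite[Proposition 1.2.1]{KV04}, again applied to $\clS X$ together with the observation that $\clS$ is local over $X$: since $p_{\clS}\colon \clS X \to X$ is affine (its pushforward of the structure sheaf is $\Omega_X$ by \eqref{eq:L:vpO=Om}), an open $U\subset X$ pulls back to the open $\clS U = p_{\clS}^{-1}(U) \subset \clS X$, and then $\clL$ preserves this restriction. The main obstacle, and the step I would be most careful about, is part (1): one must confirm that forming the de Rham spectrum genuinely commutes with the filtered colimit of nilpotent extensions that presents $\clL$ as an ind-scheme, rather than merely with the individual schemes. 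This is where the finiteness of $\bbA^{0|1}$ is essential — base change along a finite affine superscheme commutes with filtered colimits of superschemes — and I would isolate this commutation as the crux, citing $\clJ\clS = \clS\clJ$ and $\LS = \clL\clS$ from \cite[Proposition 4.2.7]{KV11} to legitimize passing $\clS$ through the ind-structure.
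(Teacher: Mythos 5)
Your proposal is correct and follows essentially the same route as the paper, which simply cites \cite[Theorem 1.4.2]{KV04} and \cite[\S4.3]{KV11}: both arguments amount to applying the known properties of $\clL$ and $\clJ$ to the superscheme $\clS X$ via the factorizations $\LS=\clL\clS$ and $\JS=\clJ\clS$. You merely spell out the bookkeeping (finiteness of $\bbA^{0|1}$, compatibility of the augmentations $t\mapsto 0$, $\tau\mapsto 0$) that the paper leaves implicit in the citations.
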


\begin{proof}
The statements follows from \cite[Theorem 1.4.2]{KV04} and \cite[\S4.3]{KV11}.
\end{proof}

Recall the definition of the structure sheaf $\shO_Y$ of an ind-superscheme $Y$ \cite[\S3.2]{KV11}.
In particular, we have the structure sheaf $\shO_{\LS X}$ of the formal superloop space $\LS X$.
By the discussion after \eqref{eq:L:vpO=Om}, 
${p_{\LS}}_*\shO_{\LS X}$ is a sheaf of (super)complexes on $X$.
Then, we have: 

\begin{prp}\label{prp:L:OLS}
For a smooth scheme $X$ of finite type over $\bbC$, we have an isomorphism
\[
 {p_{\LS}}_* \shO_{\LS X} \cong \Omega_X^{\ch}
\]
of sheaves of complexes on $X$, 
where $\Omega_X^{\ch}$ is the \emph{chiral de Rham complex} of $X$ 
(see \cite{MSV} and \cite[5.3]{KV04}).
\end{prp}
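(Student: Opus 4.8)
The plan is to reduce to an explicit computation over coordinate patches and then to verify that the resulting local identifications glue into the Malikov--Schechtman--Vaintrob sheaf $\Omega_X^{\ch}$; this is the super counterpart of the identification in \cite[\S4]{KV04}, with the de Rham spectrum $\clS$ now producing the fermionic generators and the differential directly. First I would localize: by \cref{prp:L:LSNX}(3) the formation of $\LS X$ commutes with restriction to open subschemes, so ${p_{\LS}}_*\shO_{\LS X}$ restricted to an affine open $U \subset X$ depends only on $U$. Since $\Omega_X^{\ch}$ is by construction glued from local models over opens carrying étale coordinates $x_1,\dots,x_n\colon U \to \bbA^n$, it suffices to construct an isomorphism over each such $U$ and then to check coordinate-independence and agreement on overlaps.

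Over such a $U$, I would read the coordinate functions on $\LS X$ off the functor of points \eqref{eq:L:pLS}: an $S$-point is a homomorphism $\shO_X \to \shO_S\dpr{t}^{\sqr}[\tau]$, so each $x_i$ goes to an even series $b_i(t)+\phi_i(t)\tau$ with $b_i$ even and $\phi_i$ odd in $\shO_S\dpr{t}^{\sqr}$. Expanding in $t$ yields the modes generating $\shO_{\LS X}$. By \cref{prp:L:LSNX}(1), $\LS X$ is the inductive limit of nilpotent thickenings of $\JS X$, so the nonnegative (jet) modes and the negative modes (nilpotent, by the $\sqr$-condition) respectively play the roles of the coordinate fields and their conjugates. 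Pushing forward along $p_{\LS}$ and collecting modes, the $b_i$ furnish the free bosonic system and the $\phi_i$ the free fermionic system of the MSV local model, identifying ${p_{\LS}}_*\shO_{\LS X}|_U$ with the local chiral de Rham complex as a sheaf of vertex superalgebras.

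The complex structure I would obtain from the $\clS$-factor. As recalled after \eqref{eq:L:vpO=Om}, the cohomological grading and differential on ${p_{\clS}}_*\shO_{\clS X} \cong \Omega_X$ arise from the $\Aut(\bbA^{0|1})$-action (Kontsevich's observation); applying $\clL$ transports this to a differential on ${p_{\LS}}_*\shO_{\LS X}$. On generators it sends each coordinate field to the corresponding fermion and annihilates the conjugate fields, which is exactly the MSV differential, so the identification is one of sheaves of complexes.

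The hard part will be the gluing. MSV's construction of $\Omega_X^{\ch}$ is governed by explicit, nonlinear transformation rules for the generating fields under a change of coordinates, and one must check that these coincide with the transition maps induced on $\shO_{\LS X}$ by functoriality of $\clL$ and $\clS$ applied to a change of étale coordinates. Concretely this amounts to expanding the image of $x_i' = g_i(x)$ as an even series in $\shO_S\dpr{t}^{\sqr}[\tau]$ and comparing the induced mode transformations with MSV's formulas; the nilpotency enforced by the $\sqr$-condition is what makes these expansions well defined. This comparison is the technical heart of the statement and is the super analogue of the coordinate-independence check in \cite[\S4]{KV04}.
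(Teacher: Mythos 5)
Your route is genuinely different from the paper's. The paper does not compute anything locally: it packages $i_*p_{\JS}^*\omega_X$ as a right $\shD$-module on $\LS X$, takes its Spencer (de Rham) complex to define $\CDR_X$, quotes \cite[5.3.1]{KV04} for the isomorphism $\CDR_X \cong \Omega_X^{\ch}$, and then converts this right-module statement into the left-module one about $\shO_{\LS X}$. Your proposal instead sets out to reprove that cited result by a direct coordinate computation, which is legitimate in principle but means you are taking on exactly the work the paper outsources.

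Within your route there is a genuine gap, and it is not only the gluing step you flag at the end. The local identification itself is not a matter of ``collecting modes''. An $S$-point of $\LS\bbA^n$ gives series $b_i(t)+\phi_i(t)\tau$, so the naive functions on $\LS\bbA^n$ are generated by the modes $b_{i,m}$, $\phi_{i,m}$ ($m\in\bbZ$) of these two families only, whereas the MSV local model is the $\beta\gamma$--$bc$ system on twice as many fields; the conjugate fields $a_i,\psi_i$ are not modes of the coordinate functions. Your sentence identifying the negative (nilpotent) modes with ``the conjugates'' is not correct as stated: the negative modes of $b_i$ are still modes of the single field $b_i(z)$. What actually produces the conjugate fields is the $\shD$-module nature of the structure sheaf of the ind-superscheme $\LS X$: since $\LS U$ is a limit of nilpotent thickenings of $\JS U$ (\cref{prp:L:LSNX}), sections in the transverse nilpotent directions behave like distributions supported on $\JS U$, i.e.\ like polynomials in the transverse derivations $\pd/\pd b_{i,m}$, $m<0$, and it is these that realize the $a_i$-modes. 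Making this precise is the content of \cite[\S\S4--5]{KV04}, and it is precisely why the paper's proof goes through $\CDR(\omega_X)=\Sp(i_*p_{\JS}^*\omega_X)$ rather than through a bare functions-on-the-loop-space computation. Until you supply this identification (and then carry out the coordinate-change comparison you defer), the proposal restates the theorem rather than proving it; if you do not want to do that work, the efficient argument is the paper's: cite \cite[5.3.1]{KV04} and perform the left--right translation.
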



\begin{proof}
By \cite[\S3]{KV04}, for the (ind-)superschemes $\JS X$ and $\LS X$,
we have the notion of left and right $\shD$-modules on them,
and the categories $\Dqcr{\JS X}$ and $\Dqcr{\LS X}$ of quasi-coherent right $\shD$-modules.
Moreover, for each $\shM \in \Dqcr{X}$, 
we have well-defined objects $p_{\clJ\clS}^*\shM \in \Dqcr{\clJ\clS X}$ and 
$i_*p_{\clJ\clS}^*\shM \in \Dqcr{\LS X}$ associated to the diagram \eqref{diag:L:SJSLS}.
Then, since $\LS U$ is a nilpotent-extension of $\JS U$ and 
$\clL\clS U \cong \rst{\clL\clS X}{p_{\clJ\clS}^{-1}(U)}$ for an open subset $U \subset X$ 
(\cref{prp:L:LSNX}), we have a well-defined right $\shD$-module 
$\rst{i_*p_{\clJ\clS}^*\shM}{\clL\clS U}$ on $\LS U$.
Now we define $\CDR(\shM)$ to be the the complex of sheaves of linear spaces on $X$ 
by the correspondence 
\begin{align*}
 \CDR(\shM)\colon U \lmto \Sp(\rst{i_*p_{\clJ\clS}^*\shM}{\clL\clS U}),
\end{align*}
where, for a right $\shD$-module $\shN$ on the ind-superscheme $\clL\clS U$, $\Sp(\shN)$ denotes
the de Rham complex $\operatorname{DR}(\shN)$ constructed in \cite[\S4.3]{KV04}
(we use the symbol $\Sp$ and call it the Spencer complex, 
 following the standard terminology \cite[Lemma 1.5.27]{HTT}). 
In the case $\shM=\omega_X$, 
the sheaf of differential forms of top degree on the smooth scheme $X$, we denote
$\CDR_X \ceq \CDR(\omega_X)$.
Now, \cite[5.3.1]{KV04} states that 
\begin{align*}
 \CDR_X \cong \Omega_X^{\ch}
\end{align*} 
as sheaves of complexes on $X$.
Turning this ``right'' $\shD$-module'' argument the ``left'' one, we have the consequence.
\end{proof}

\subsection{Global formal superloop space} 

We recall the notion of a global formal loop space in the super setting 
from \cite[\S2.1]{KV04} and \cite[\S5.1]{KV11}.

Let $C$ be a smooth supercurve (of dimension $1|1$).
For a non-empty finite set $I$ and a superscheme $S$, we express a morphism $c_I\colon S \to C^I$
as a tuple $c_I=(c_i)_{i \in I}$ with $c_i\colon S \to C$.
We denote by $\Gamma=\Gamma(c_I) \subset S \times C$ the union of the graphs 
of $c_i\colon S \to C$, $i \in I$.
Then, we have the following sheaves of superalgebras on $S \times C$ supported on $\Gamma$.
\begin{itemize}
\item 
$\wh{\shO}_\Gamma$, the completion of $\shO_{S \times C}$ along $\Gamma$.
Thus, it is the sheaf of functions on the formal neighborhood of $\Gamma$.

\item
$\shK_\Gamma \ceq \wh{\shO}_\Gamma[r^{-1}]$, where $r$ is a local equation of $\Gamma$ in $S \times C$.
Thus, it is the sheaf of functions on the punctured formal neighborhood of $\Gamma$.

\item
$\shK_\Gamma^{\sqr} \subset \shK_\Gamma$, the subsheaf of sections whose restriction to 
$S_{\tred} \times C$ (see \eqref{eq:SC:red}) lies in $\wh{O}_{\Gamma_{\tred}}$.
Here $\Gamma_{\tred}$ is the graph of the restriction $c_{I,\tred}\colon S_{\tred} \to C^I$ 
of $c_I$ to $S_{\tred}$.
\end{itemize}

Now, let $X$ be a superscheme of finite type.
We define two functors $\lambda_{X,C^I},j_{X,C^I}\colon \SSch \to \cSet$ by
\begin{align}
\nonumber
 j_{X,C^I}&\colon S \lmto \{(c_I,\phi) \mid c_I\colon S \to C^I, \, 
 \phi \in \Hom_{\SSp}((\ul{\Gamma},\wh{\shO}_\Gamma),X)\}, \\
\label{eq:L:lamXCI}
 \lambda_{X,C^I}&\colon S \lmto \{(c_I,\phi) \mid c_I\colon S \to C^I, \, 
 \phi \in \Hom_{\SSp}((\ul{\Gamma},\shK_\Gamma^{\sqr}),X)\}.
\end{align}

\begin{fct}[{\cite[\S2.5, \S2.6]{KV04}, \cite[\S5.3]{KV11}}]\label{fct:L:JCI-LCI}
Let $X,C,I$ be as above.
There is a superscheme $\JC{I}X$ represents the functor $j_{X,C^I}$, 
and an ind-superscheme $\LC{I}X$ representing $\lambda_{X,C^I}$.
We call them the global formal superjet space and the global formal superloop space
(for $I$), respectively.
\end{fct}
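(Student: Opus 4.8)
The plan is to prove representability \emph{locally over the configuration base} $C^I$ and then glue, reducing each local model to the absolute formal superjet and superloop spaces $\JS X$ and $\LS X$, whose representability is already available by \cite[Proposition 4.2.7]{KV11}. The forgetful assignment $(c_I,\phi) \mapsto c_I$ gives natural transformations to the functor of points of $C^I$, so it suffices to represent $j_{X,C^I}$ and $\lambda_{X,C^I}$ compatibly over an étale cover of $C^I$ by coordinate patches.

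First I would treat the case $\abs{I}=1$. Choose an affine open $V \subset C$ with an étale coordinate $Z=(z,\zeta)\colon V \to \bbA^{1|1}_\bbC$ as in \eqref{eq:SC:Z}. For a superscheme $S$ and a morphism $c\colon S \to V$, the graph $\Gamma=\Gamma(c) \subset S \times V$ is cut out by the even function $z_2-c^*z$ and the odd function $\zeta_2-c^*\zeta$, where $z_2,\zeta_2$ denote the pullbacks of $z,\zeta$ along the second projection. Completing $\shO_{S \times V}$ along $\Gamma$ then yields a canonical isomorphism $\wh{\shO}_\Gamma \cong \shO_S\dbr{T}$ with $T=(t,\tau)$, $t \ceq z_2-c^*z$, $\tau \ceq \zeta_2-c^*\zeta$, and similarly $\shK_\Gamma^{\sqr} \cong \shO_S\dpr{t}^{\sqr}[\tau]$. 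Under these identifications the functor $j_{X,V}$ becomes $S \mapsto \{c\colon S \to V\} \times \Hom_{\SSp}((\ul{S},\shO_S\dbr{T}),X)$, whose second factor is represented by $\JS X$; hence $j_{X,V}$ is represented by $V \times \JS X$. Likewise $\lambda_{X,V}$, whose second factor is the functor \eqref{eq:L:pLS}, is represented by the $\LS X$-bundle $V \times \LS X$, an ind-superscheme by the local structure recorded in \cref{prp:L:LSNX}.

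For general $I$ I would cover $C^I$ by products of such coordinate patches. On the open locus where the $c_i$ are pairwise distinct, $\Gamma$ is a disjoint union of graphs, so $\wh{\shO}_\Gamma$ and $\shK_\Gamma^{\sqr}$ factor as (completed) products of the one-point models, giving representability there by the $\abs{I}$-fold fibre product of the $\abs{I}=1$ construction over the base. The behaviour across the diagonals, where graphs collide, is controlled by the filtration defining the ind-structure: one writes $\LC{I}X$ as a filtered colimit of nilpotent thickenings of $\JC{I}X$, parallel to the first assertion of \cref{prp:L:LSNX}, and uses that the tame (``square-root'') condition defining $\shK_\Gamma^{\sqr}$ is specified by restriction to $S_{\tred}$ and hence is intrinsic.

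The main obstacle is the gluing. One must verify that the local representing objects are independent of the chosen coordinate $Z$ up to canonical isomorphism and satisfy the cocycle condition on overlaps: a change of étale coordinate on $C$ induces a (continuous) change of the formal supervariable $T$, hence an automorphism of $\shO_S\dbr{T}$ and of $\shO_S\dpr{t}^{\sqr}[\tau]$, and the content is that these reparametrization automorphisms are functorial in $S$ and assemble to automorphisms of the $\JS X$- and $\LS X$-bundles respecting the ind-structure. This coordinate-free nature of $\wh{\shO}_\Gamma$ and $\shK_\Gamma^{\sqr}$, together with the compatibility of the reparametrization action with the nilpotent filtration, is the real work; once it is in place, étale descent along $C^I$ produces the global superscheme $\JC{I}X$ and the global ind-superscheme $\LC{I}X$. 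All of this is carried out in \cite[\S2.5, \S2.6]{KV04} and \cite[\S5.3]{KV11}.
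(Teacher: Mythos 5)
The paper offers no proof of this statement: it is recorded as a \textbf{Fact} imported from \cite[\S2.5, \S2.6]{KV04} and \cite[\S5.3]{KV11}, so there is no internal argument to compare against. That said, your strategy --- identify the local model over a coordinate patch with $V \times \JS X$ (resp.\ $V \times \LS X$) via $\wh{\shO}_\Gamma \cong \shO_S\dbr{T}$ and $\shK_\Gamma^{\sqr} \cong \shO_S\dpr{t}^{\sqr}[\tau]$, then glue by tracking the reparametrization action --- is exactly the mechanism the paper itself records later for $\abs{I}=1$: \cref{fct:L:LC=LS} presents $\JC{}X$ and $\LC{}X$ as the twists $\clJ\clS X \times_{K_{1|1}}\wh{C}$ and $\clL\clS X \times_{K_{1|1}}\wh{C}$ by the torsor $\wh{C}$ of formal coordinates, which is precisely your ``coordinate-change automorphisms assemble into descent data'' step packaged via the Harish--Chandra pair $(\frg_{1|1},K_{1|1})$. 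For $\abs{I}=1$ your outline is therefore a faithful reconstruction of the cited proof.

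The genuine gap is in the passage to general $I$. The product description you invoke is only valid on the open locus $U_{\id_I}$ where the $c_i$ are pairwise distinct, and this locus together with coordinate patches adapted to one-point models does \emph{not} form a cover of $C^I$ on which the functor reduces to products: every open neighbourhood of a point of a partial diagonal contains colliding configurations, so near the diagonals $\Gamma(c_I)$ is a single non-split divisor of relative degree $\abs{I}$ and $\wh{\shO}_\Gamma$ is not a product of one-point completions. Your appeal to ``the filtration defining the ind-structure'' conflates two separate issues: the ind-structure of $\LC{I}X$ over $\JC{I}X$ (the analogue of \cref{prp:L:LSNX}(1)) concerns the nilpotent Laurent tails, not the representability of the jet functor $j_{X,C^I}$ over the diagonal strata. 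The latter is the substantive content of \cite[\S2.5]{KV04}: one uses that the union of graphs is finite and flat over $S$, so that $\wh{\shO}_\Gamma$ is a pro-(finite flat) $\shO_S$-algebra, and realizes $j_{X,C^I}$ as a limit of Weil-restriction-type functors; only then does the factorization over $U_{\id_I}$ become an \emph{isomorphism of already-constructed objects} rather than part of the construction. As written, your argument constructs the representing objects only away from the diagonals.
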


We have a diagram similar to \eqref{diag:L:SJSLS}:
\begin{equation*}
\begin{tikzcd}
 X \times C^I & \JC{I}X \ar[l,"p"'] \ar[r,"i"] & \LC{I}X 
\end{tikzcd}
\end{equation*}
Here $p$ is the projection $(c_I,\phi) \mto \Gamma(c_I)$, and $i$ is a closed embedding.
If $I$ consists of one point, then
we denote these (ind-)superschemes by $\clJ\clS_CX$ and $\clL\clS_CX$.
They can be described explicitly using Harish-Chandra pairs, as we will recall below.

We have the notion of a \emph{Harish-Chandra pair} $(\frg,K)$ in the sense of \cite[2.9.7]{BD}.
It is a pair of a (topological) Lie algebra $\frg$ and an affine group scheme $K$
equipped with a (continuous) embedding of Lie algebras $\phi\colon \frk \ceq \Lie K \inj \frg$ 
and a $K$-action $\alpha$ on the Lie algebra $\frg$ such that the $\frk$-action on $\frg$ 
induced by $\alpha$ is equal to $\ad_{\phi}$.
See also \cite[\S17.2]{FB} for explanations.
We have an obvious super version of Harish-Chandra pairs.

Let $\bbC[Z]$ be the polynomial superalgebra over $\bbC$ with supervariable $Z=(z,\zeta)$ 
(see \eqref{eq:L:CT}).
As explained in \cite[\S5.3]{KV11}, we have a Harish-Chandra pair 
\begin{align}\label{eq:L:gK}
 (\frg_{1|1},K_{1|1}), \quad 
  \frg_{1|1} \ceq \Der_{\bbC}(\bbC\dbr{Z}), \quad 
  K_{1|1} \ceq \Aut(\bbC\dbr{Z}).
\end{align}
of the (topological) Lie superalgebra $\frg_{1|1}$ of derivations and 
the group superscheme $K_{1|1}$ of automorphisms of the supercommutative algebra $\bbC\dbr{Z}$.
The derivations $\pd_Z=(\pd_z,\pd_\zeta)$ (see \cref{eq:SC:pdZ})
form a (topological) generator of $\frg_{1|1}$.
For a supercommutative ring $R$, an $R$-point of $K_{1|1}$ is an invertible change of coordinates
\[
  z    \lmto \sum_{i \ge 0, \, j \in \{0,1\}} a_{i,j} z^i \zeta^j, \quad 
 \zeta \lmto \sum_{i \ge 0, \, j \in \{0,1\}} b_{i,j} z^i \zeta^j.
\]
Here the element $a_{i,j} \in R$ is of parity $\ol{j}$, and
the element $b_{i,j} \in R$ is of parity $\ol{j+1}$.
We require that $a_{0,0}=0$, $a_{1,0} \in R_{\ev}^{\times}$, and $b_{0,1} \in R_{\ev}^\times$.
Note that the Lie superalgebra $\frk_{1|1} \ceq \Lie(K_{1|1})$ is the Lie sub-superalgebra of 
$\frg_{1|1}$ which preserve the maximal ideal $\frm \subset \bbC\dbr{Z}$,
and is generated by $z\pd_z$ and $\pd_\zeta$.

Let $C$ be a smooth supercurve of dimension $1|1$.
Following \cite[\S5.3]{KV11}, we denote by 
\begin{align}\label{eq:GL:whC}
 \pi\colon \wh{C} \lto C
\end{align}
the superscheme over $C$ whose $S$-points for a superscheme $S$ are tuples $(c,Z)$ 
of an $S$-point $c\colon S \to C$ and a local coordinate system 
$Z=(z,\zeta)$ at $c$ (see \cref{eq:SC:Z}).
The projection $\pi$ is given by $(c,Z) \mto c$.

The superscheme $\wh{C}$ over $C$ has a natural $(\frg_{1|1},K_{1|1})$-structure \cite[2.9.8]{BD},
i.e., compatible actions of $\frg_{1|1}$ and $K_{1|1}$ such that 
$K_{1|1}$ acts along $\pi$, making $\wh{C}$ a $K_{1|1}$-torsor over $C$,
and the action of $\frg_{1|1}$ is (formally) simply transitive.
The latter condition is equivalent to the one that the morphism 
$(\frg_{1|1}/\frk_{1|1}) \otimes \shO_C \to \pi^*\Theta_C$ defined by the $\frg$-action 
and $d\pi$ is an isomorphism.

Now, let $X$ be a superscheme of finite type,
so that we have the formal superjet space $\JS X$ \eqref{eq:L:pJS} 
and the formal superloop space $\LS X$ \eqref{eq:L:pLS}.
By construction, the group $K_{1|1}$ acts fiberwise on 
the structure morphisms $p_{\JS}\colon \JS X \to X$ and $p_{\LS}\colon \LS X \to X$. 
Hence, we can construct a superscheme and an ind-superscheme
\begin{align}\label{eq:L:JC-LC}
 \clJ\clS X \times_{K_{1|1}} \wh{C}, \quad \clL\clS X \times_{K_{1|1}} \wh{C}.
\end{align}
By construction, we have the structure morphisms from these (ind-)superschemes to $X \times C$.
Let us cite:

\begin{fct}[{\cite[2.1.2]{KV04}, \cite[\S5.1, \S5.3]{KV11}}]\label{fct:L:LC=LS}
Let $X$ be a superscheme of finite type, and $C$ be a smooth superscheme of dimension $1|1$.
Then we have isomorphisms 
\[
 \JC{}X \cong \clJ\clS X \times_{K_{1|1}} \wh{C}, \quad 
 \LC{}X \cong \clL\clS X \times_{K_{1|1}} \wh{C}.
\]
\end{fct}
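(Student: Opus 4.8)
The plan is to prove both isomorphisms by comparing the functors of points that the two sides represent, the comparison being effected by the elementary observation that a choice of local coordinate system trivializes the formal (resp.\ punctured formal) neighborhood of a graph. I will treat the loop-space statement $\LC{}X \cong \clL\clS X \times_{K_{1|1}} \wh{C}$ in detail; the jet-space statement $\JC{}X \cong \clJ\clS X \times_{K_{1|1}} \wh{C}$ is the same argument with $\wh{\shO}_\Gamma$ and $\shO_S\dbr{T}$ in place of $\shK_\Gamma^{\sqr}$ and $\shO_S\dpr{t}^{\sqr}[\tau]$, and with no $\sqr$-condition to keep track of.

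First I would spell out the functor represented by the associated ind-superscheme: an $S$-point of $\clL\clS X \times_{K_{1|1}} \wh{C}$ is, fppf-locally on $S$, a pair $(\psi,(c,Z))$ modulo the diagonal $K_{1|1}$-action, where $(c,Z)$ is an $S$-point of the torsor $\wh{C}$ — that is, $c\colon S \to C$ together with a local coordinate system $Z=(z,\zeta)$ along $c$ — and $\psi$ is an $S$-point of $\clL\clS X$; here $K_{1|1}=\Aut(\bbC\dbr{Z})$ acts through its torsor action on $\wh{C}$ and through its fibrewise action on $p_{\LS}\colon \LS X \to X$. The two projections $\LS X \to X$ and $\pi\colon \wh{C}\to C$ are $K_{1|1}$-invariant, so the associated bundle lies over $X\times C$, matching $\JC{}X\to X\times C$.

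The key step is the coordinate identification. Given $(c,Z)$, the relative coordinates $z-c^{*}z$ and $\zeta-c^{*}\zeta$ generate the ideal of the graph $\Gamma=\Gamma(c)\subset S\times C$, so completing and inverting a local equation yields isomorphisms of sheaves of superalgebras on $\ul{\Gamma}\cong\ul{S}$,
\[
 \wh{\shO}_\Gamma \cong \shO_S\dbr{T}, \qquad
 \shK_\Gamma^{\sqr} \cong \shO_S\dpr{t}^{\sqr}[\tau], \qquad T=(t,\tau),
\]
sending $t\mapsto z-c^{*}z$ and $\tau\mapsto\zeta-c^{*}\zeta$; the $\sqr$-condition defining $\shK_\Gamma^{\sqr}$ (restriction to $S_{\tred}\times C$ lying in $\wh{\shO}_{\Gamma_{\tred}}$) corresponds exactly to the nilpotence of the negative coefficients defining $\shO_S\dpr{t}^{\sqr}$. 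Transporting $\psi\in\Hom_{\SSp}((\ul{S},\shO_S\dpr{t}^{\sqr}[\tau]),X)$ across this identification produces $\phi\in\Hom_{\SSp}((\ul{\Gamma},\shK_\Gamma^{\sqr}),X)$, so $(c,\phi)$ is an $S$-point of $\LC{}X$; this defines a natural transformation to $\lambda_{X,C}$ of \eqref{eq:L:lamXCI}. I would then verify well-definedness on $K_{1|1}$-orbits — replacing $Z$ by $g\cdot Z$ twists the trivialization by the automorphism $g$ of $\shO_S\dpr{t}^{\sqr}[\tau]$, which is cancelled precisely by the fibrewise action of $g$ on $\psi$ — and construct the inverse by lifting $c$ to $\wh{C}$ étale-locally on $S$ (possible since $\wh{C}\to C$ is a $K_{1|1}$-torsor) and observing that two lifts differ by $K_{1|1}$, so the orbit descends. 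Yoneda then gives the isomorphism of (ind-)superschemes.

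The hard part will be the bookkeeping around the $K_{1|1}$-equivariance and the descent rather than the coordinate identification itself. Two points deserve care: first, that the ambiguity in a coordinate system along $c$ is exactly a $K_{1|1}$-torsor — this is the content, recalled in the preceding text, of $\wh{C}\to C$ carrying a $(\frg_{1|1},K_{1|1})$-structure, and it is what licenses the associated-bundle construction; second, that forming $(-)\times_{K_{1|1}}\wh{C}$ is compatible with the passage from the superscheme $\clJ\clS X$ to the ind-superscheme $\clL\clS X$, i.e.\ that it commutes with the filtered colimit presenting $\LS X$ as nilpotent extensions of $\JS X$ (\cref{prp:L:LSNX}). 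Since $K_{1|1}$ is a pro-algebraic group superscheme, this last compatibility, together with the descent used to globalize the inverse, is where the genuine technical content lies; granting it, the isomorphism is the functorial coordinate-transport described above, as in the non-super case (\cref{fct:L:JCI-LCI}).
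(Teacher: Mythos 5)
The paper does not prove this statement: it is imported as a Fact with citations to \cite[2.1.2]{KV04} and \cite[\S5.1, \S5.3]{KV11}, and your functor-of-points argument --- trivializing $\wh{\shO}_\Gamma$ and $\shK_\Gamma^{\sqr}$ by the relative coordinates attached to an $S$-point of the torsor $\wh{C}$, checking $K_{1|1}$-equivariance, and descending --- is exactly the coordinate-transport proof given in those references. Your proposal is correct and takes the same approach as the cited sources, including correctly identifying the ind-structure compatibility and torsor descent as the only points requiring real care.
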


Similarly to \cref{prp:L:OLS}, we have an explicit description 
of the structure sheaf $\shO_{\LC{}X}$. 

\begin{prp}\label{prp:FS:OLSC}
Let $X$ and $C$ be as in \cref{fct:L:LC=LS}, 
and denote by $p\colon \LC{}X \to X \times C$ the projection.
Then the fiber of $p_*\shO_{\LC{}X}$ at a point of $C$ is isomorphic to 
${p_{\LS}}_*\shO_{\LS X}$, and hence isomorphic to $\Omega_X^{\ch}$ by \cref{prp:L:OLS}.
\end{prp}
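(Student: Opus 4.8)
The plan is to exploit the associated-bundle presentation of \cref{fct:L:LC=LS}, namely the isomorphism $\LC{}X \cong \clL\clS X \times_{K_{1|1}} \wh{C}$ of ind-superschemes over $X \times C$, and to reduce the fiber computation to the triviality of the coordinate torsor $\wh{C} \to C$ over a single point. Throughout, let $p\colon \LC{}X \to X \times C$ be the projection, fix a closed point $c$ of $C$, and write $i_c\colon X \cong X \times \{c\} \inj X \times C$ for the inclusion; by the ``fiber at $c$'' of $p_*\shO_{\LC{}X}$ I mean its restriction $i_c^*p_*\shO_{\LC{}X}$, a sheaf on $X$.

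First I would form the Cartesian square with top row the fiber $(\LC{}X)_c \inj \LC{}X$ over $c$ and bottom row $i_c$, and denote by $p_c\colon (\LC{}X)_c \to X$ the induced projection. Applying \cref{fct:L:LC=LS}, the fiber is identified with that of the associated bundle, $(\LC{}X)_c \cong \clL\clS X \times_{K_{1|1}} \wh{C}_c$, where $\wh{C}_c = \pi^{-1}(c)$ is the fiber of $\pi$ in \eqref{eq:GL:whC}. Since $\pi\colon \wh{C} \to C$ is a $K_{1|1}$-torsor, $\wh{C}_c$ is a $K_{1|1}$-torsor over $\kappa(c) = \bbC$, and it carries a rational point: any local coordinate system $Z=(z,\zeta)$ at $c$. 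Such a point trivializes the torsor, $\wh{C}_c \cong K_{1|1}$, so the contracted product collapses, $(\LC{}X)_c \cong \clL\clS X \times_{K_{1|1}} K_{1|1} \cong \clL\clS X$, compatibly with the maps to $X$; under this, $p_c$ is identified with $p_{\LS}\colon \LS X \to X$. The isomorphism depends on the choice of $Z$, but different choices differ by the $K_{1|1}$-action, so the isomorphism class, hence the conclusion, is independent of it.

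It then remains to commute the fiber functor $i_c^*$ with the pushforward $p_*$, i.e.\ to produce the base-change isomorphism $i_c^*p_*\shO_{\LC{}X} \cong (p_c)_*\shO_{(\LC{}X)_c}$. I expect this to be the main obstacle, since $\LC{}X$ is an ind-superscheme and $\wh{C}$ is acted on by the pro-algebraic group $K_{1|1}$, so ordinary coherent base change does not apply verbatim. The resolution is that $p$ is ind-affine: by \cref{fct:L:JCI-LCI} and \cref{prp:L:LSNX}, $\LC{}X$ is an inductive limit of nilpotent thickenings of the global formal superjet scheme $\JC{}X$, whose structure morphism to $X \times C$ is affine (being, after trivializing $\wh{C}$ \'etale-locally on $C$, a product of the affine projection $\clJ\clS X \to X$ of \cite[Proposition 1.2.1 (d)]{KV04} with $C$). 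For ind-affine morphisms the quasi-coherent pushforward is computed termwise and commutes with base change along the closed embedding $i_c$, giving the desired isomorphism.

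Combining the base-change isomorphism with the trivialization of the previous paragraph yields $i_c^*p_*\shO_{\LC{}X} \cong {p_{\LS}}_*\shO_{\LS X}$, and \cref{prp:L:OLS} identifies the right-hand side with $\Omega_X^{\ch}$, as sheaves of complexes once one checks that the trivialization respects the dg structure induced by the $\Aut(\bbA^{0|1})$-action. This completes the plan.
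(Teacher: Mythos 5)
Your argument is correct, but it takes a different (and more self-contained) route than the paper: the paper's entire proof of \cref{prp:FS:OLSC} is a one-line citation of \cite[5.1.2.\ Proposition (a)]{KV04} with $\shM=\omega_X$, whereas you actually reconstruct the mechanism behind that citation. Concretely, your steps --- restrict the associated bundle $\clL\clS X \times_{K_{1|1}} \wh{C}$ of \cref{fct:L:LC=LS} to the fibre $\wh{C}_c$, trivialize that $K_{1|1}$-torsor over $\Spec\bbC$ by choosing a coordinate system $Z$ at $c$ so that the contracted product collapses to $\LS X$, and then commute $i_c^*$ past $p_*$ --- are exactly what the cited proposition encapsulates, so in substance the two proofs agree; what your version buys is that the reader sees where the identification with ${p_{\LS}}_*\shO_{\LS X}$ comes from and why it is independent of the choice of $Z$. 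Two points are worth making explicit if this is written out. First, the trivialization of $\pi\colon\wh{C}\to C$ is already Zariski-local (a coordinate system $Z\colon U\to\bbA^{1|1}_{\bbC}$ is a section of $\pi$ over $U$), and this is what reduces the affineness of $\JC{}X\to X\times C$ to the affineness of $\clJ\clS X\to X$ from \cite[Proposition 1.2.1 (d)]{KV04}. Second, since $\shO_{\LC{}X}$ is by definition a limit over the terms of the ind-system of nilpotent thickenings of $\JC{}X$ (\cref{prp:L:LSNX}), the base-change isomorphism should be established term by term --- where affine base change for quasi-coherent sheaves needs no flatness hypothesis --- before passing to the limit, as you indicate; this is not a gap, but it is the one place where the ind-structure genuinely enters. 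Your closing remark about checking that the trivialization respects the dg structure coming from the $\Aut(\bbA^{0|1})$-action is the right thing to verify and holds because that action is part of the $K_{1|1}$-equivariant data, hence descends through the contracted product.
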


\begin{proof}
This is a rewritten form of \cite[5.1.2.\ Proposition (a)]{KV04}, 
setting the right $\shD_X$-module $\shM$ to be $\omega_X$. 
\end{proof}

We refer to \cite[Proposition 3.1.7]{KV11} 
for the notion of an integrable connection on $E$ along $B$,
where $B$ is a superscheme and $E$ is an ind-superscheme $B$.
It is a natural extension of the crystalline description of a left $\shD$-module 
on a smooth algebraic variety (see \cite[Appendix]{G} and \cite[\S8]{S} for the non-super case).

%

which makes sense not only for superschemes but for ind-superschemes.
By the standard argument (see \cite{BB} and \cite[\S17.2]{FB} for example), 
the $(\frg_{1|1},K_{1|1})$-structure on $\wh{C} \to C$ induces an integrable connection on $\wh{C}$
along $C$, given by the action of the generators $\pd_Z=(\pd_z,\pd_\zeta)$ of $\frg_{1|1}$.
It induces integrable connections on the superspaces \eqref{eq:L:JC-LC} along $C$.
Thus, by \cref{fct:L:LC=LS}, 
the superspaces $\JC{}X$ and $\JC{}X$ have integrable connections along $C$.
More generally, we have:

\begin{fct}[{\cite[Proposition 5.2.7]{KV11}}]\label{fct:L:ic}
For each non-empty finite set $I$, 
the global formal superjet space $\JC{I}X$ and the global formal superloop space $\LC{I}X$
have integrable connections along $C$.
\end{fct}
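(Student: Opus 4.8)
The plan is to bootstrap from the single-point case $|I|=1$, which was settled just above from the $(\frg_{1|1},K_{1|1})$-structure on $\wh C\to C$ \eqref{eq:GL:whC}, to arbitrary $I$, using the crystalline description of connections on ind-superschemes \cite[Proposition 3.1.7]{KV11}. In that description, an integrable connection along $C^I$ on an ind-superscheme over $X\times C^I$ is a stratification: an isomorphism between the two pullbacks to the formal neighborhood of the diagonal $C^I\inj C^I\times C^I$ that restricts to the identity along the diagonal and satisfies the cocycle condition on the triple product. I will use the equivalent infinitesimal form, namely a flat lift of the tautological action of the tangent Lie algebroid $\Theta_{C^I}$ to the total space. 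For $|I|=1$ this flat lift is exactly the connection along $C$ recorded after \eqref{eq:L:JC-LC}: the generators $\pd_Z=(\pd_z,\pd_\zeta)$ of $\frg_{1|1}$ act on $\wh C$, and, through the isomorphism $(\frg_{1|1}/\frk_{1|1})\otimes\shO_C\cong\pi^*\Theta_C$ and the associated-space description $\LC{}X\cong\LS X\times_{K_{1|1}}\wh C$ of \cref{fct:L:LC=LS}, this transports to a flat lift of $\Theta_C$. (For $|I|=1$ the phrase ``along $C$'' of the statement is literal; for general $I$ I read it as ``along the curve directions $C^I$''.)

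First I would construct the connection over the open locus $U_I\subset C^I$ where the $I$ marked points are pairwise distinct. There the graph $\Gamma(c_I)$ of an $S$-point is the disjoint union of the graphs of the $c_i$, so the sheaf $\shK^{\sqr}_\Gamma$ splits as a product over the components and the datum $\phi$ defining a point of $\LC{I}X$ \eqref{eq:L:lamXCI} decomposes into single-point data at each $c_i$. Applying the $|I|=1$ construction at each marked point yields horizontal lifts of the summands $\Theta_C$ of $\Theta_{C^I}|_{U_I}=\bigoplus_{i\in I}\Theta_C$; these lifts commute, because distinct points involve disjoint parts of $C$ and hence the corresponding $\frg_{1|1}$-actions commute. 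Consequently they assemble to a flat lift of $\Theta_{C^I}$ over $U_I$, and flatness reduces to the flatness already established for a single point together with this commutativity.

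The \textbf{main obstacle} is to extend the connection across the diagonal strata of $C^I$, where marked points collide and the product decomposition above breaks down. I would resolve this using the factorization space structure on $\LC{I}X$ recalled in \cref{s:FS} (\cref{fct:FS:LSC}): over a locus where the points group into clusters, the factorization isomorphisms express $\LC{I}X$ through the spaces $\LC{I_k}X$ attached to the clusters, and since these isomorphisms are built from the same local-coordinate data on $\wh C$, the $\frg_{1|1}$-actions are compatible with them. Hence, by induction on the collision stratification of $C^I$, the connection on $U_I$ glues to an integrable connection along $C^I$ on all of $\LC{I}X$; the gluing and the cocycle condition can be checked on the formal-disc local model, where by \cref{fct:L:LC=LS} everything is expressed through $\LS X\times_{K_{1|1}}\wh C$ and the verification reduces to the Harish-Chandra compatibility of the $(\frg_{1|1},K_{1|1})$-structure used in the single-point case. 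Running the identical argument with $\clJ\clS$ in place of $\clL\clS$, i.e.\ with $\wh{\shO}_\Gamma$ in place of $\shK^{\sqr}_\Gamma$, produces the integrable connection on the global formal superjet space $\JC{I}X$, completing the proof.
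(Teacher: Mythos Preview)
The paper does not prove this statement; it records it as a Fact cited from \cite[Proposition 5.2.7]{KV11}. The only argument the paper itself gives is the sentence just before the Fact, which treats $|I|=1$ via the $(\frg_{1|1},K_{1|1})$-structure on $\wh C$ and then defers the general $I$ to the reference.

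Your proposal has a structural problem: you invoke the factorization space structure of \cref{fct:FS:LSC} to push the connection from $U_I$ across the diagonals, but in the paper's logical order \cref{fct:L:ic} is an \emph{input} to \cref{fct:FS:LSC}. Indeed, \cref{dfn:FS:FS} requires each $Y_J$ to already carry an integrable connection along $C^J$, and the isomorphisms $\nu_p,\kappa_p$ are required to respect it; so appealing to the factorization structure to manufacture the connection is circular as written. You might try to rescue this by using only the bare isomorphisms $\kappa_p,\nu_p$ (which, as the paragraph after \cref{fct:FS:LSC} explains, come from the functor of points alone), but then your gluing step is still a gap: the $\kappa_p$ live over the open loci $U_p$ and the $\nu_p$ over the closed diagonals, and neither furnishes a mechanism to \emph{extend} a connection defined on $U_I$ to one on all of $C^I$. ``Induction on the collision stratification'' is a slogan, not an argument, here.

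The route that bypasses both the induction and the factorization machinery is to read the crystalline condition directly off the functor of points \eqref{eq:L:lamXCI}. If two $S$-points $c_I,c_I'\colon S\to C^I$ agree on $S_{\tred}$, then the graphs $\Gamma(c_I)$ and $\Gamma(c_I')$ have the same underlying reduced closed subset of $S\times C$, hence the same formal completion $\wh{\shO}_\Gamma$ and the same $\shK_\Gamma^{\sqr}$. This yields a canonical identification of the fibers of $\LC{I}X$ (and of $\JC{I}X$) over $c_I$ and $c_I'$, which is precisely the stratification datum in \cite[Proposition 3.1.7]{KV11}; the cocycle condition is automatic because the identification is literally an equality of sheaves. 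This works uniformly for every $I$, needs no reference to $U_I$ or to factorization, and is the natural reading of the cited result.
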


\section{Factorization structure}\label{s:FS}

We recall the factorization structure on 
the global superloop spaces $\LC{I}X$ from \cite{KV04,KV11}.
We continue to work over $\bbC$.

\subsection{Factorization space}\label{ss:FS:FS}

We recall several terminology from \cite[\S2.2]{KV04} and \cite[\S5.2]{KV11}.

\begin{dfn}
Let $C$ be a superscheme, 
and $p\colon J \srj I$ be a surjection of non-empty finite sets.
\begin{enumerate}
\item
For $i \in I$, we denote $J_i \ceq p^{-1}(i) \subset J$.

\item
We define the \emph{diagonal map associated with $p$} to be the embedding of superscheme
\begin{align}\label{eq:L:Delta}
 \Delta_p\colon C^I \linj C^J, \quad 
 (c_i)_{i \in I} \lmto (c_{p(j)})_{j \in J}.
\end{align}

\item
We denote an open sub-superscheme $U_p \subset C^J$ to be 
the complement of the partial diagonals, i.e., 
\[
 U_p \ceq \{(c_j)_{j \in J} \in C^J \mid 
            \text{$c_j \ne c_k$ for any pair $j,k \in J$ with $p(j) \ne p(k)$}\}.
\]
We denote the associated embedding by
\begin{align}\label{eq:L:jp}
 j_p\colon U_p \linj C^J.
\end{align}
\end{enumerate}
\end{dfn}

\begin{rmk}
The notational comments are in oder.
\begin{enumerate}
\item
Our $\Delta_p\colon C^I \to C^J$ is denoted
by $\Delta^{(p)}$ or $\Delta^{(J/I)}$ in \cite[2.2.3]{BD} and \cite[2.2]{KV04}. 


\item
Our $j_p\colon U_p \inj C^J$ is denoted
by $j^{(J/I)}\colon U^{(J/I)} \inj C^J$ in \cite[2.2]{KV04},
by $j^{(p)}\colon U^{(p)} \inj C^J$ in \cite[3.1.2]{BD}, 
by $j^{[J/I]}\colon U^{[J/I]} \inj C^J$ in \cite[3.4.4]{BD},
and by $\jmath(p)\colon U(p) \inj C^J$ in \cite[2.2.3]{FG}.
\end{enumerate}
\end{rmk}

\begin{eg}
Let $C$ be a superscheme, and $J$ be a non-empty finite set.
\begin{enumerate}
\item 
Let $p_J\colon J \to \{1\}$ be the surjection from $J$ to the one point set. Then 
\begin{align}\label{eq:L:Delta_J}
 \Delta_J \ceq \Delta_{p_J}\colon C \linj C^J
\end{align}
is the small diagonal map.

\item
Let $\id_J\colon J \to J$ be the identity map of $J$. Then 
$U_J \ceq U_{\id_J} \subset C^J$ is the complement of the big diagonal 
$\{(c_j)_{j \in J} \in C^J \mid \text{$c_j=c_k $ for some $j \ne k \in J$}\}$.
The associated embedding $j_{id_J}$ is denoted by
\begin{align}\label{eq:L:j_J}
 j_J\colon U_J \linj C^J
\end{align}
\end{enumerate} 
\end{eg}

Hereafter, using the terminology of schemes, 
an ind-superscheme over a superscheme $C$ is called a \emph{$C$-ind-superscheme}.
Similarly we use the words \emph{$C$-morphism} and \emph{$C$-isomorphisms} of $C$-ind-superschemes.

\begin{dfn}[{\cite[2.2.1]{KV04}}]\label{dfn:FS:FS}
A \emph{factorization space} over a smooth superscheme $C$ is the following data.
\begin{itemize}
\item
For each non-empty finite set $J$, 
an ind-superscheme $Y_J$ over $C^J$ which is formally smooth \cite[\S3.1, p.1095]{KV11}
and equipped with integrable connections along $C^J$ \cite[Proposition 3.1.7]{KV11}.

\item
For each surjection $p\colon J \srj I$, 
a $C^I$-isomorphism $\nu_p\colon \Delta_p^*(Y_J) \sto Y_I$
which respects the integrable connections.

\item
For each pair $K\sr{q}J\sr{p}I$ of composable surjections, 
a $C^p$-isomorphism $\kappa_p\colon j_p^*(\prod_{i \in I}Y_{J_i}) \sto j_p^*(Y_J)$
which respects the integrable connections.
\end{itemize}
These should satisfy the following the following conditions 
for each pair $K\sr{q}J\sr{p}I$ of composable surjections.
\begin{gather*}
 \nu_{pq}=\nu_p\circ\Delta_p^*(\nu_q), \\
 \kappa_q =  \kappa_{pq} \circ \tprd_{i \in I} \kappa_{q|K_i}, \\
 \nu_q \circ \Delta_q^*(\kappa_{pq}) = \kappa_p \circ \tprd_{i \in I}\nu_{q|K_i}.
\end{gather*}
Here $q|K_i\colon K_i \to J_i$ is the restriction of $q$ to $K_i \ceq (pq)^{-1}(i) \subset K$.
\end{dfn}

We denote a factorization space as $(Y_I)_I$ for simplicity.
The isomorphisms $\kappa_p$ are called the \emph{factorization isomorphisms} of $(Y_I)_I$.

\begin{rmk}
We comment on the terminology and notations.
\begin{enumerate}
\item 
Our factorization space is called a factorization semi-group in \cite{KV11}.
See \cite[Remark 5.2.5]{KV11} for the related notions appearing in \cite{BD} and \cite{KV04}.

\item
Our $\nu_p$ is denoted by $\nu^{(J/I)}$ in \cite[2.2.1]{KV04}, and corresponds to 
$\varkappa_{p_I,p}$ with $p_I\colon I \srj \{1\}$ in \cite[Definition 5.2.2 (b)]{KV11}.

\item
Our $\kappa_p$ for $p\colon J \srj I$ is denoted by $\kappa^{(J/I)}$ in \cite[2.2.1]{KV04},
and corresponds to $c_{[J/I]}$ in \cite[3.4.4]{BD}.
There seems to be a typing error in the corresponding condition of \cite[2.2.1 (b)]{KV04}.
\end{enumerate}
\end{rmk}


Now, let $X$ be a superscheme of finite type, and $C$ be a smooth superscheme of dimension $1|1$.
By \cref{fct:L:JCI-LCI}, we have the family $(\LC{I}X)_I$ over $X \times C^I$, 
where $I$ runs over non-empty finite sets.
We denote the composition of the structure morphism $\LC{I}X \to X \times C^I$ and 
the projection $X \times C^I \to C^I$ by
\begin{align}\label{eq:FS:pI}
 p_I\colon \LC{I}X \lto C^I.
\end{align} 
Also, by \cref{fct:L:ic}, there is an integrable connections on $\LC{I}X$ along $C$. 
Then we have: 

\begin{fct}[{\cite[2.3.3]{KV04}, \cite[Proposition 5.3.2]{KV11}}]\label{fct:FS:LSC}
Let $X$ and $C$ be as above. Then
the family $(\LC{I}X)_I$ 
has a structure of a factorization space over $C$.
\end{fct}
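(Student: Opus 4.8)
The plan is to verify directly the three pieces of data and the three cocycle conditions in \cref{dfn:FS:FS}, working throughout with the functor-of-points description \eqref{eq:L:lamXCI} of $\LC{I}X$ as the moduli of pairs $(c_I,\phi)$, where $c_I\colon S \to C^I$ and $\phi \in \Hom_{\SSp}((\ul{\Gamma},\shK_\Gamma^{\sqr}),X)$ with $\Gamma=\Gamma(c_I)$. The formal smoothness of each $\LC{J}X$ over $C^J$ and the integrable connections along $C^J$ are already in hand: the former from the construction of $\LC{J}X$ as an inductive limit of nilpotent extensions of the jet superscheme $\JC{J}X$ (\cref{fct:L:JCI-LCI}), and the latter from \cref{fct:L:ic}. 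So the real content is to produce the isomorphisms $\nu_p$ and $\kappa_p$ and to check their compatibilities.

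For a surjection $p\colon J \srj I$, the diagonal $\Delta_p\colon C^I \inj C^J$ sends $c_I=(c_i)_i$ to $c_J=(c_{p(j)})_j$, and the two tuples define the \emph{same} graph: $\Gamma(c_J)=\bigcup_j \Gamma(c_{p(j)})=\bigcup_i \Gamma(c_i)=\Gamma(c_I)$ inside $S \times C$, hence the same sheaves $\wh{\shO}_\Gamma$, $\shK_\Gamma$, $\shK_\Gamma^{\sqr}$. An $S$-point of $\Delta_p^*(\LC{J}X)$ is therefore exactly the datum $(c_I,\phi)$ of an $S$-point of $\LC{I}X$, and this tautological identification defines the $C^I$-isomorphism $\nu_p$. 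To build $\kappa_p$, I would restrict to the open locus $U_p \subset C^J$ on which $c_j \ne c_k$ whenever $p(j) \ne p(k)$. There the graph $\Gamma(c_J)$ is the disjoint union $\coprod_{i \in I}\Gamma(c_{J_i})$ of its pieces over the fibers $J_i$, so the punctured formal neighborhood and its sheaf $\shK_\Gamma^{\sqr}$ split as the corresponding product; a morphism $\phi$ from the whole is then the same as a tuple $(\phi_i)_i$ of morphisms from the pieces, i.e.\ an $S$-point of $\tprd_{i \in I}\LC{J_i}X$. This yields the factorization isomorphism $j_p^*(\tprd_{i \in I}\LC{J_i}X) \sto j_p^*(\LC{J}X)$.

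The three cocycle identities then reduce to the associativity and naturality of these two bookkeeping operations — collapsing graphs (for $\nu$) and splitting disjoint graphs (for $\kappa$) — and hold because each identification is canonical at the level of the representing functors. Compatibility of $\nu_p$ and $\kappa_p$ with the integrable connections follows because the connections of \cref{fct:L:ic} are themselves induced functorially from the $(\frg_{1|1},K_{1|1})$-structure on $\wh{C}\to C$ through the presentation of \cref{fct:L:LC=LS}, with respect to which both $\nu_p$ and $\kappa_p$ are equivariant. The main obstacle is the geometric input underlying $\kappa_p$: the super analogue of the factorization property of the sheaves $\shK_\Gamma^{\sqr}$, asserting that over $U_p$ the nilpotent-completed, square-root punctured formal neighborhood of a disjoint union of graphs is the external product of the punctured neighborhoods of the pieces. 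One must check that the $\sqr$-condition (nilpotence of the polar coefficients) and the odd directions of $C$ are compatible with this splitting; this is the step I expect to require the most attention, and it is precisely the point established in \cite[\S2.3]{KV04} and \cite[\S5.3]{KV11}.
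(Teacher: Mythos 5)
Your proposal matches the paper's treatment: the paper records this as a Fact cited from \cite[2.3.3]{KV04} and \cite[Proposition 5.3.2]{KV11} and then outlines precisely the same two identifications on $S$-points — $\nu_p$ from $\Gamma(c_J)=\Gamma(c_I)$ over the partial diagonal, and $\kappa_p$ from the splitting $\Gamma(c_J)=\bigsqcup_{i\in I}\Gamma(c_{J_i})$ and the resulting isomorphism $\shK^{\sqr}_{\Gamma(c_J)}[\tau]\cong\prod_{i\in I}\shK^{\sqr}_{\Gamma(c_{J_i})}[\tau]$ over $U_p$. The delicate point you flag (compatibility of the $\sqr$-condition and the odd direction with the splitting) is exactly what is delegated to the cited references.
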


For later reference, let us outline the structure.
Let $p\colon J \srj I$ be a surjection of non-empty sets.
Recall the functor $\lambda_{X,C^J}$ represented by $\LC{J}X$ in \eqref{eq:L:lamXCI}.
As for the isomorphism $\nu_p$ in \cref{dfn:FS:FS}, note that 
an $S$-point (where $S$ is a superscheme) of $\Delta_p^*(\LC{J}X)$ 
is a pair $(c_J,\phi)$ where $c_J$ is a morphism $S \to C^J$ 
whose image lies in the partial diagonal $\Delta_p \cong C^I \subset C^J$.
Thus, $c_J$ comes from a uniquely defined $I$-tuple $c_I\colon S \to C^I$.
Then, $\Gamma(c_J)=\Gamma(c_I)$, and the sheaves $\shK^{\sqr}$ associated to them coincide.
Now, we have a natural identification of $S$-points of $\Delta_p^*(\LC{J}X)$ and $\LC{I}X$,
and hence we have the isomorphism $\nu_p$.
As for the factorization isomorphism $\kappa_p$, 
an $S$-point of $j_p^*(\LC{J}X)$ is a pair $(c_J,\phi)$ with $c_J\colon S \to U_p$.
Then we have $\Gamma(c_J)=\bigsqcup_{i \in I}\Gamma(c_{J_i})$ with $J_i \ceq p^{-1}(i) \subset J$.
Hence, we have an isomorphism 
$\shK^{\sqr}_{\Gamma(c_J)}[\tau] \cong \prod_{i \in I}\shK^{\sqr}_{\Gamma(c_{J_i})}[\tau]$,
which induces the isomorphism $\kappa_p$.

\subsection{Factorization \texorpdfstring{$\shD$}{D}-module}

We recall the notion of a factorization algebra \cite[3.4]{BD} in the super setting.
Recall the notation of $\shD$-modules given in \cref{ss:SC:pre}.

\begin{dfn}[{c.f.\ \cite[3.4.1, 3.4.4]{BD}}]\label{dfn:FS:FA}
Let $C$ be a smooth supervariety.
A \emph{factorization $\shD$-module over $C$} is the following data.
\begin{itemize}
\item 
For each non-empty set $J$, an object $\shF_J \in \Dqcl{C^J}$
which has no non-zero local sections supported at the union of all partial diagonals
$\Delta_p\colon C^I \inj C^J$, $p\colon J \srj I$ (see \eqref{eq:L:Delta}).

\item
For each surjection $p\colon J \srj I$ of non-empty finite sets, an isomorphism 
$\nu_p\colon \Delta_p^*\shF_J \sto \shF_I$ in $\Dqcl{C^I}$. 

\item
For each $p\colon J \srj I$, an isomorphism 
$\kappa_p\colon j_p^*(\boxtimes_{i \in I}\shF_{J_i}) \sto j_p^*(\shF_J)$ in $\Dqcl{C^J}$,
where $j_p\colon U_p \inj C^J$ is the open embedding \eqref{eq:L:jp}.
\end{itemize}
These should satisfy the following conditions for each composable pair $K\sr{q}J\sr{p}I$.
\begin{gather*}
 \nu_{pq}=\nu_p \circ \Delta_p^*(\nu_q)\colon \Delta_{pq}^*\shF_K \lsto \shF_I, \\ 
 \kappa_q =  \kappa_{pq} \circ \tprd_{i \in I} \kappa_{q|K_i}, \\
 \nu_q \circ \Delta_q^*(\kappa_{pq}) = \kappa_p \circ \tprd_{i \in I}\nu_{q|K_i}.
\end{gather*}
Here $q|K_i\colon K_i \to J_i$ is the restriction of $q$ to $K_i \ceq (pq)^{-1}(i) \subset K$.
\end{dfn}

We denote a factorization $\shD$-module as $(\shF_I)_I$ for simplicity.
A morphism of factorization $\shD$-modules over $C$ is naturally defined.

\begin{rmk}
Some comments are in order.
\begin{enumerate}

\item
In \cite{BD}, our $\shF_I$ is denoted by $\shF_{C^I}$,  
our $\nu_p$ is denoted by $\nu^{(p)}=\nu^{(J/I)}$, 
and our $\kappa_p$ is denoted by $c_{[J/I]}$.

\item
Our terminology ``factorization $\shD$-module" respects the proposal in \cite[Remark 2.4.8]{FG}. 
It is called a factorization algebra in \cite{BD}.
\end{enumerate}
\end{rmk}

Recall the definition of the structure sheaf $\shO_Y$ of an ind-superscheme $Y$ \cite[\S3.2]{KV11}.
By comparing \cref{dfn:FS:FS} and \cref{dfn:FS:FA}, we immediately have:

\begin{lem}\label{lem:FS:p*O}
Let $C$ be a smooth superscheme, and $(Y_I)_I$ be a factorization space over $C$.
For each non-empty finite set $I$, denote the structure morphism $Y_I \to C^I$ by $p_I$.
Then, the family $\bigl((p_I)_*\shO_{Y_I}\bigr)_I$ of sheaves on $C^I$'s 
has a structure of a factorization $\shD$-module over $C$.
\end{lem}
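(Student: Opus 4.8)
The plan is to set $\shF_I \ceq (p_I)_*\shO_{Y_I}$ for each non-empty finite set $I$, and to show that the functor sending an ind-superscheme $Y$ over $C^I$ to the pushforward of its structure sheaf carries the three pieces of data of a factorization space (\cref{dfn:FS:FS}) to the three pieces of data of a factorization $\shD$-module (\cref{dfn:FS:FA}). First I would verify that $\shF_I \in \Dqcl{C^I}$. Quasi-coherence over $\shO_{C^I}$ is immediate since $p_I$ is (ind-)affine, and the integrable connection on $Y_I$ along $C^I$ that is part of the factorization space structure induces, via the crystalline description \cite[Proposition 3.1.7]{KV11} (recall \cref{ss:SC:D}), an integrable connection on $(p_I)_*\shO_{Y_I}$ along $C^I$, i.e.\ a left $\shD_{C^I}$-module structure. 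It then remains to check that $\shF_I$ has no non-zero local sections supported on the partial diagonals; this is the one point that is not pure formality, and I would deduce it from the formal smoothness of $Y_I$ over $C^I$, which yields flatness of $\shF_I$ over $\shO_{C^I}$ and hence torsion-freeness, so that $\shF_I$ can carry no section supported on a proper closed sub-superscheme such as a diagonal.

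Next I would produce the isomorphisms $\nu_p$ and $\kappa_p$ by applying the functor $p_*\shO_{(-)}$ to the corresponding isomorphisms of the factorization space. For a surjection $p\colon J \srj I$, base change of the ind-affine morphism $p_J$ along the closed embedding $\Delta_p$ gives $\Delta_p^*\shF_J \cong (p_I)_*\shO_{\Delta_p^*(Y_J)}$, and composing with $(p_I)_*\shO_{\nu_p}$ yields the desired $\nu_p\colon \Delta_p^*\shF_J \sto \shF_I$ in $\Dqcl{C^I}$; here the left $\shD$-module inverse image $\Delta_p^*$ agrees with the $\shO$-module one, as noted in \cref{ss:SC:D}. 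For the factorization isomorphism, a Künneth identification $(p)_*\shO_{\tprd_{i \in I}Y_{J_i}} \cong \boxtimes_{i \in I}\shF_{J_i}$ over $C^J = \tprd_{i\in I}C^{J_i}$, followed by restriction along the open embedding $j_p$ and composition with $(p)_*\shO_{\kappa_p}$, yields $\kappa_p\colon j_p^*(\boxtimes_{i\in I}\shF_{J_i}) \sto j_p^*\shF_J$. Both isomorphisms respect the left $\shD$-module structures because $\nu_p$ and $\kappa_p$ are required in \cref{dfn:FS:FS} to respect the integrable connections.

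Finally, the three compatibility identities of \cref{dfn:FS:FA} are obtained by applying $p_*\shO_{(-)}$ to the three compatibility identities of \cref{dfn:FS:FS}; since they are literally the same diagrams transported along a functor, they hold automatically. The main obstacle I anticipate is not any of the diagram-chases but rather justifying the two identifications used in the previous paragraph---base change for $\Delta_p$ and the Künneth formula for $\tprd_{i\in I} Y_{J_i}$---in the ind-superscheme setting, together with the torsion-free/support argument above; these are precisely the places where one must invoke the (ind-)affineness and formal smoothness of the structure morphisms $p_I$ rather than mere functoriality.
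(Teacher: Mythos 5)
Your proposal is correct and follows essentially the same route as the paper, which simply observes that the three pieces of data and three compatibility conditions of \cref{dfn:FS:FS} transport term-by-term to those of \cref{dfn:FS:FA} under $Y_I \mapsto (p_I)_*\shO_{Y_I}$ and states the lemma as immediate. Your write-up merely makes explicit the points the paper leaves tacit (quasi-coherence, the connection giving the left $\shD$-module structure, base change and K\"unneth, and the absence of sections supported on diagonals via formal smoothness), so there is nothing to correct.
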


In particular, by \cref{fct:FS:LSC}, we have: 

\begin{prp}\label{prp:FS:pLC}
Let $C$ be a smooth supercurve of dimension $1|1$, and $X$ be a smooth scheme of finite type.
Then the family $\bigl({p_I}_*\shO_{\LC{I}X}\bigr)_I$ has a structure of a factorization 
$\shD$-module over $C$, where $p_I\colon \LC{I}X \to X$ is given in \eqref{eq:FS:pI}.
\end{prp}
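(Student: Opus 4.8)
The plan is to deduce \cref{prp:FS:pLC} directly from the two preceding results, treating it as an application rather than an independent argument. By \cref{fct:FS:LSC}, the family $\bigl(\LC{I}X\bigr)_I$ carries a factorization space structure over $C$ in the sense of \cref{dfn:FS:FS}, with the structure morphisms $p_I\colon \LC{I}X \to C^I$ of \eqref{eq:FS:pI} and the isomorphisms $\nu_p$, $\kappa_p$ whose explicit form was outlined after \cref{fct:FS:LSC}. By \cref{lem:FS:p*O}, the pushforward of the structure sheaves of any factorization space is a factorization $\shD$-module. So the heart of the proof is simply to apply \cref{lem:FS:p*O} to the factorization space $\bigl(\LC{I}X\bigr)_I$, obtaining that $\bigl({p_I}_*\shO_{\LC{I}X}\bigr)_I$ is a factorization $\shD$-module over $C$.

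First I would record that the hypotheses of \cref{lem:FS:p*O} are met: $C$ is a smooth superscheme (indeed a smooth supercurve of dimension $1|1$) and $\bigl(\LC{I}X\bigr)_I$ is a factorization space over $C$ by \cref{fct:FS:LSC}. Note that \cref{fct:FS:LSC} is stated for $X$ a superscheme of finite type, so the present hypothesis that $X$ is a smooth scheme of finite type is a special case and poses no difficulty. Applying \cref{lem:FS:p*O} then yields the factorization $\shD$-module structure on $\bigl({p_I}_*\shO_{\LC{I}X}\bigr)_I$ over $C$, which is exactly the assertion. The integrable connections needed to view each ${p_I}_*\shO_{\LC{I}X}$ as an object of $\Dqcl{C^I}$ come from \cref{fct:L:ic}, and the isomorphisms $\nu_p$, $\kappa_p$ of the factorization $\shD$-module are induced by the corresponding $C$-isomorphisms of the factorization space via pushforward, as in the proof of \cref{lem:FS:p*O}.

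The one genuine point to check — and what I expect to be the main obstacle — is the support condition in \cref{dfn:FS:FA}: each $\shF_I = {p_I}_*\shO_{\LC{I}X}$ must have no non-zero local sections supported on the union of the partial diagonals $\Delta_p\colon C^I \inj C^J$. This condition is not part of the factorization space axioms in \cref{dfn:FS:FS}, so it does not come for free from \cref{lem:FS:p*O} unless that lemma already builds it in; I would verify it using the smoothness of $X$ together with the structure of $\LC{I}X$ as an inductive limit of nilpotent extensions of the jet space (\cref{prp:L:LSNX}), which forces the pushforward to be a sheaf with no sections supported on proper closed subsets of the relevant diagonals. Concretely, flatness (or torsion-freeness) of the relevant $\shO$-modules along the diagonal directions, inherited from the local description of $\LC{}X$ via the Harish-Chandra pair $(\frg_{1|1},K_{1|1})$ in \cref{fct:L:LC=LS}, gives the vanishing. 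Once this is confirmed, the statement follows, and the proof is essentially a citation of \cref{fct:FS:LSC} and \cref{lem:FS:p*O}.
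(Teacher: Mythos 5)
Your proposal is correct and follows exactly the paper's route: the proposition is stated as an immediate consequence of applying \cref{lem:FS:p*O} to the factorization space structure on $\bigl(\LC{I}X\bigr)_I$ provided by \cref{fct:FS:LSC}. Your additional attention to the support condition of \cref{dfn:FS:FA} is a reasonable extra check that the paper leaves implicit in \cref{lem:FS:p*O}, but it does not change the argument.
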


\subsection{Equivalence with chiral algebras}\label{ss:L:NW}

By \cite[3.4]{BD} and \cite{FG}, a factorization $\shD$-module on a smooth curve is 
equivalent to a chiral algebra \cite[3.1]{BD} on the curve. 
We explain that the equivalence is naturally extended to the super setting,
using the chiral algebra on a smooth supercurve introduced in \cite[\S4]{H}.


Let $C$ be a smooth supercurve of dimension $1|1$.
We denote the diagonal embedding and the open embedding of the complement by 
$\Delta\colon C \linj C^2$, $j\colon U \linj C^2$ as in \eqref{eq:SC:D-U}
(denoted by $\Delta_{\{1,2\}}$ in \eqref{eq:L:Delta_J} and 
 by $j_{\{1,2\}}\colon U_{\{1,2\}} \inj C^2$ in \eqref{eq:L:j_J}).
We also denote by $\Delta_3\colon C \inj C^3$ the small diagonal embedding
($\Delta_{\{1,2,3\}}$ in \eqref{eq:L:Delta_J}).
The open embedding of the complement of the big diagonal is denoted by $j_3\colon U_3 \inj C^3$
($j_{\{1,2,3\}}\colon U_{\{1,2,3\}} \inj C^3$ in \eqref{eq:L:j_J}).

Let $\shA \in \Dqcr{C}$, i.e., a quasi-coherent right $\shD_C$-module, and 
$\mu\colon j_*j^*(\shA^{\boxtimes 2}) \to \Delta_*\shA$ be a morphism in $\Dqcr{C^2}$.
We define $\mu_{1\{23\}}\colon {j_3}_*j_3^*(\shA^{\boxtimes 3}) \lto {\Delta_3}_*\shA$
by first applying $\mu$ to the second and third factor, and then applying $\mu$ to 
the first factor and the result. Similarly we define $\mu_{\{12\}3}$. 
We also define $\mu_{2\{13\}}$ using the transposition $\sigma_{12}$ of the first and second factors.
See \cite[19.3.1]{FB} and \cite[4.1.10]{H} for the detail of these definitions.

\begin{dfn}[{\cite[Definition 4.10]{H}}]
Let $C$ be a smooth supercurve of dimension $1|1$.
A \emph{chiral algebra} on $C$ is a quasi-coherent right $\shD_C$-module $\shA$ 
equipped with a morphism $\mu\colon j_*j^*(\shA^{\boxtimes 2}) \to \Delta_*\shA$ 
in $\Dqcr{C^2}$ satisfying the following conditions.
\begin{clist}
\item
$\mu=-\mu \circ \sigma_{12}$.

\item
$\mu_{1\{23\}}=\mu_{\{12\}3}+\mu_{2\{13\}}$
\end{clist}
We call $\mu$ the \emph{chiral bracket} of the chiral algebra $\shA$.
\end{dfn}

We also have the notion of a \emph{morphism} 
and the \emph{category of chiral algebras} on $C$.

%

Recall that a factorization $\shD$-module over $C$ (\cref{dfn:FS:FA}) is a family of 
quasi-coherent left $\shD_{C^I}$-modules $\shF_I$ for non-empty finite sets $I$
equipped with compatible system of isomorphisms.
The following categorical equivalence is shown for 
the non-super case in \cite[3.4.9]{BD} and \cite{FG}
(precisely speaking, \cite{BD} gives the equivalence for unital objects).
The proof works also for the super case, as mentioned in \cite[3.4.9, Remark (ii)]{BD}, and we have:

\begin{fct}\label{fct:FS:FA=CA}
Let $C$ be a smooth supercurve of dimension $1|1$.
Then, we have an equivalence between
\begin{itemize}
\item 
the category of factorization $\shD$-modules $(\shF_I)_I$ over $C$, 
\item
the category of chiral algebras $\shA$ on $C$,
\end{itemize}
which lifts the correspondence of left and right $\shD$-modules 
$\shF_{\{1\}} \mto \shA = (\shF_{\{1\}})^r$ (see \eqref{eq:D:L=R}).
\end{fct}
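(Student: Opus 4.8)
The plan is to transplant the non-super equivalence of Beilinson--Drinfeld \cite[3.4.9]{BD} and Francis--Gaitsgory \cite{FG} to the $1|1$-dimensional super setting, constructing the two functors explicitly and checking that the only genuinely new ingredients---the Berezinian twist $(-)^r = \omega_C \otimes (-)$ of \eqref{eq:D:L=R} in place of the canonical bundle, the $1|1$-codimensional diagonal $\Delta \subset C^2$, and the Koszul signs---cause no trouble. On objects the functor is forced by the statement: $\shF_{\{1\}} \mapsto \shA = (\shF_{\{1\}})^r$.

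For the forward functor (factorization $\shD$-module to chiral algebra) I would manufacture the chiral bracket $\mu\colon j_*j^*(\shA^{\boxtimes 2}) \to \Delta_*\shA$ as follows. Writing $\shN \ceq (\shF_{\{1,2\}})^r$ for the right version on $C^2$ and using $\omega_{C^2}=\omega_C\boxtimes\omega_C$, the factorization isomorphism $\kappa_p$ for $p\colon \{1,2\}\srj\{1\}$ (see \cref{dfn:FS:FA}) identifies $j^*(\shA^{\boxtimes 2})$ with $j^*\shN$ on $U$; I would then compose $j_*j^*\shN$ with the canonical surjection $j_*j^*\shN \srj j_*j^*\shN/\shN \cong \Delta_*(\text{residue})$ and identify the target with $\Delta_*\shA$ via $\nu_p$. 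The crucial super input is that this canonical surjection already exists: it is exactly the content of \cref{ss:SC:SCD}, where \eqref{eq:SC:mu-omega} and \eqref{eq:SC:jj} realise $\Delta_*\shM$ and $j_*j^*\shM$ through meromorphic sections modulo regular ones via the formal delta function $\delta(Z_1,Z_2)$, and where \eqref{eq:SC:mu-omega} is noted to hold for any smooth supercurve, not only superconformal ones. This super-residue is what replaces the naive residue along a divisor, $\Delta$ now having codimension $1|1$.

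The chiral axioms should then drop out as in \cite{BD}: the super-skew-symmetry $\mu = -\mu\circ\sigma_{12}$ from the symmetry of $\shF_{\{1,2\}}$ under swapping the two copies of $C$, and the Jacobi identity from the two cocycle relations of \cref{dfn:FS:FA} applied to the three composable surjections out of $\{1,2,3\}$; here I would track the Koszul sign produced by transposing the two tensor factors, which is precisely what upgrades ordinary skew-symmetry to the super version. For the quasi-inverse I would reconstruct $\shF_{C^I}$ from a chiral algebra $(\shA,\mu)$ by the fusion/chiral-chains construction of \cite[3.4]{BD}, and verify that the resulting $\nu_p,\kappa_p$ satisfy \cref{dfn:FS:FA}; mutual invertibility then reduces to the cases $I=\{1\}$ and $I=\{1,2\}$, which are immediate from the constructions.

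The main obstacle is the backward functor together with the verification that the structural $\shD$-module lemmas underlying it survive the super passage---above all the canonical exact sequence $0 \to \shN \to j_*j^*\shN \to \Delta_*(\text{residue}) \to 0$ and the vanishing of sections supported on the now $1|1$-codimensional partial diagonals. I expect these to hold because \cref{ss:SC:SCD} has already installed the super-residue formalism via $\delta(Z_1,Z_2)$ and because the $\shD$-module direct and inverse image functors on smooth superschemes (\cref{ss:SC:D}, following \cite{P}) obey the same formal properties as in the classical case; what remains is the consistent bookkeeping of parities and $\omega_C$-twists flagged in \cite[3.4.9, Remark (ii)]{BD}.
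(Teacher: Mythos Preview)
Your proposal is correct and aligns with the paper's treatment: the paper does not prove this statement at all but records it as a Fact, citing \cite[3.4.9]{BD} and \cite{FG} for the non-super case and invoking \cite[3.4.9, Remark (ii)]{BD} for the assertion that the argument carries over verbatim to the super setting. Your forward construction of the chiral bracket is exactly what the paper recalls immediately after the statement in \eqref{eq:FS:mu}; the only minor slip is that the formal delta function $\delta(Z_1,Z_2)$ of \cref{ss:SC:SCD} is tailored to the superconformal diagonal $\Delta^{\tsc}$, whereas here one uses the ordinary diagonal $\Delta$ and its analogous (simpler) delta function---but this does not affect the argument.
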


For later reference, we recall from \cite[3.4.8]{BD} the chiral bracket $\mu$ 
corresponding to a factorization $\shD$-module $(\shF_I)_I$.
We denote $\shF_1 \ceq \shF_{\{1\}}$ and $\shF_2 \ceq \shF_{\{1,2\}}$ for simplicity.
For the surjection $p_{[2]}\colon \{1,2\} \srj \{1\}$, 
the isomorphism $\nu_{p_{[2]}}$ induces a right $\shD$-module isomorphism
$\nu^r\colon (\Delta_*\omega_C) \otimes \shF_2 \sto \Delta_*(\omega_C \otimes \shF_1)=\Delta_*\shA$.
Recall the morphism $\mu_\omega$ in \eqref{eq:SC:mu-omega}, which can be written 
using \eqref{eq:SC:jj} as $\mu_\omega\colon j_*j^*(\omega_C^{\boxtimes 2}) \to \Delta_*\omega_C$, 
where $j\colon U \inj C^2$ is the open embedding of the complement of $\Delta \subset C^2$
(see \eqref{eq:SC:D-U}).
Then the composition
\begin{align}\label{eq:FS:mu}
 j_*j^*\shA = j_*j^*\bigl((\omega_C \otimes \shF_1)^{\boxtimes 2}\bigr) 
 \xr[\id \otimes \kappa_{p_{[2]}}]{\sim}
 \bigl(j_*j^*(\omega_C^{\boxtimes 2})\bigr) \otimes \shF_2
 \xr[\mu_\omega \otimes \id]{} (\Delta_*\omega_C) \otimes \shF_2
 \xr[\nu^r]{\ \sim \ }
 \Delta_*\shA
\end{align}
is the desired chiral bracket on $\shA$.

Now recall from \cref{prp:FS:pLC} that we have a factorization $\shD$-module
$\bigl({p_I}_*\shO_{\LC{I}X}\bigr)_I$ for a smooth scheme $X$ of finite type. 
The underlying right $\shD_C$-module of the corresponding chiral algebra in \cref{fct:FS:FA=CA} 
is $\shF^r$, where $\shF \ceq {p_{\{1\}}}_*\shO_{\LC{}X}$.
By \cref{prp:FS:OLSC}, the fiber of $\shF$ over a point of $C$ 
isomorphic to the chiral de Rham complex $\Omega_X^{\ch}$.
Let us summarize the argument as:

\begin{cor}\label{cor:FS:CdR}
Let $C$ be a smooth supercurve of dimension $1|1$, and $X$ be a smooth scheme of finite type.
Then there is a chiral algebra on $C$ whose fiber at a point of $C$ is isomorphic to 
the chiral de Rham complex $\Omega_X^{\ch}$.
\end{cor}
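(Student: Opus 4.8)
The plan is to combine the three results established immediately above, since the corollary is essentially a matter of assembling them. First I would take the factorization $\shD$-module $\bigl({p_I}_*\shO_{\LC{I}X}\bigr)_I$ over $C$ supplied by \cref{prp:FS:pLC}. Applying the equivalence of \cref{fct:FS:FA=CA} to this object produces a chiral algebra $\shA$ on $C$ whose underlying right $\shD_C$-module is $\shF^r = \omega_C \otimes \shF$, where $\shF \ceq {p_{\{1\}}}_*\shO_{\LC{}X}$ is the value of the family on the one-point set $\{1\}$ (recall \eqref{eq:D:L=R} for the left-to-right passage). This already exhibits a chiral algebra on $C$; the remaining task is only to identify its fiber at a point of $C$.

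For the fiber computation I would invoke \cref{prp:FS:OLSC}, which identifies the fiber of $\shF$ at a point of $C$ with ${p_{\LS}}_*\shO_{\LS X}$, together with \cref{prp:L:OLS}, which identifies the latter with the chiral de Rham complex $\Omega_X^{\ch}$ as sheaves of complexes on $X$. Chaining these two isomorphisms gives that the fiber of $\shF$ at a point of $C$ is isomorphic to $\Omega_X^{\ch}$.

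The only point requiring an explicit (if routine) check is that passing from $\shF$ to the chiral algebra $\shA = \omega_C \otimes \shF$ leaves the fiber unchanged. Since $C$ has dimension $1|1$, the Berezinian sheaf $\omega_C$ of \eqref{eq:SC:Ber} is an invertible $\shO_C$-module, so a local trivialization of $\omega_C$ near the chosen point induces an isomorphism between the fibers of $\shF^r$ and of $\shF$ there; combined with the previous paragraph this yields the claimed isomorphism of the fiber of $\shA$ with $\Omega_X^{\ch}$. I do not anticipate a genuine obstacle here: all the substantive content has been absorbed into \cref{prp:FS:pLC}, \cref{fct:FS:FA=CA}, and \cref{prp:FS:OLSC}, and the Berezinian twist trivializes upon restriction to a point, so the statement is a direct corollary of the preceding development.
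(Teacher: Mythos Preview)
Your proposal is correct and matches the paper's own argument essentially verbatim: the paper derives the corollary from \cref{prp:FS:pLC}, \cref{fct:FS:FA=CA}, and \cref{prp:FS:OLSC} in precisely the order you describe, with the corollary stated as a summary of that paragraph. Your additional remark about the Berezinian twist trivializing at a point is a harmless elaboration the paper leaves implicit.
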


\section{\texorpdfstring{$N_K=1$}{NK=1} SUSY structure from factorization structure}\label{s:NK}

We write down the chiral algebra structure on $\Omega_X^{\ch}$ (\cref{cor:FS:CdR})
in the case $C$ is a superconformal curve, and show that 
it gives a $N_K=1$ SUSY vertex algebra structure on $\Omega_X^{\ch}$.

First, we introduce natural superconformal analogue of the materials in \cref{ss:FS:FS}.
Let $(C,\shS)$ be a superconformal curve, and $Z=(z,\zeta)$ be its superconformal coordinate system.
For a non-empty finite set $J$, let $Z_j \ceq p_j^*Z$ ($j \in J$) be the induced coordinate system 
on $C^J$ with $p_j\colon C^J \to C$ the $j$-th projection.

For a surjection $p\colon J \srj I$ of non-empty finite sets, 
we define $U^{\tsc}_p \subset C^J$ the open sub-superscheme which is locally
defined by $z_j-z_k-\zeta_j\zeta_k \ne 0$ for $j,k \in J$ with $p(j) \ne p(k)$.
It is also well defined. The embedding is denoted by 
\[
 j^{\tsc}_p\colon U^{\tsc}_p \linj C^J.
\]
For the surjection $p_{[2]}\colon \{1,2\} \srj \{1\}$, then $U^{\tsc}_{p_{[2]}} \subset C^2$ 
is the complement of the superdiagonal $\Delta^{\tsc} \subset C^2$.

Now, let $X$ be a smooth algebraic variety, 
and consider the factorization space $(\JC{I}X)_I$ in \cref{fct:FS:LSC}.
Let us denote $Y_I \ceq \JC{I}X$ for simplicity.
Then, we have:

\begin{lem}\label{lem:NK:kappa}
For each surjection $p\colon J \srj I$ of non-empty finite sets, 
the factorization isomorphism  $\kappa_p$ of $(Y_I=\JC{I}X)_I$ induces 
\[
 \kappa_p^{\tsc}\colon {j_p^{\tsc}}^*(\tprd_{i \in I}Y_{J_i}) \lsto {j_p^{\tsc}}^*(Y_J).
\]
\end{lem}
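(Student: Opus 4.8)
The plan is to show that the superconformal open subscheme $U_p^{\tsc}$ coincides, as an open sub-superscheme of $C^J$, with the ordinary complement $U_p$ appearing in \cref{dfn:FS:FS}. Once this identification is in hand we have $j_p^{\tsc}=j_p$, hence ${j_p^{\tsc}}^*=j_p^*$, and the sought-for isomorphism $\kappa_p^{\tsc}$ is then literally the factorization isomorphism $\kappa_p$ of $(Y_I)_I$. Thus essentially all the content of the lemma is concentrated in the single coincidence $U_p^{\tsc}=U_p$, and no new compatibilities will have to be checked.

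To establish this I would first recall that an open sub-superscheme of $C^J$ is determined by its underlying open topological subspace together with the restricted structure sheaf, so it suffices to verify that $U_p^{\tsc}$ and $U_p$ have the same underlying open subset of $\ul{C}^J$. Working in the superconformal coordinates $Z_j=(z_j,\zeta_j)$, the locus $U_p$ is cut out by the invertibility of the functions $z_j-z_k$ for the pairs with $p(j)\ne p(k)$, whereas $U_p^{\tsc}$ is cut out by the invertibility of $z_j-z_k-\zeta_j\zeta_k$ for the same pairs (see \cref{fct:SC:Z}). The key point is that $\zeta_j\zeta_k$ is even and nilpotent, indeed $(\zeta_j\zeta_k)^2=0$, so that $z_j-z_k-\zeta_j\zeta_k$ is invertible in any stalk precisely when its body $z_j-z_k$ is; hence the two invertibility loci agree pair by pair, and therefore so do their finite intersections. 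This is exactly the observation, already recorded after \cref{fct:SC:Z}, that $\Delta^{\tsc}$ and $\Delta$ share the same underlying topological space, now extended from a single diagonal to all relevant partial diagonals.

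With $U_p^{\tsc}=U_p$ and $j_p^{\tsc}=j_p$ in place, the proof concludes by simply declaring $\kappa_p^{\tsc}\ceq\kappa_p$ under the identification ${j_p^{\tsc}}^*=j_p^*$; the triple of compatibility conditions of \cref{dfn:FS:FS} are inherited verbatim from those of $(Y_I)_I$. The only genuine verification, and the step I would treat most carefully, is the coordinate-independent well-definedness of $U_p^{\tsc}$ together with the invertibility argument above; everything else is formal. I do not anticipate a serious obstacle here, precisely because the passage from $\Delta$ to $\Delta^{\tsc}$ alters the closed-subscheme structure (recall $\shI_\Delta^2\subset\shI_{\Delta^{\tsc}}\subset\shI_\Delta$) yet leaves the open complement untouched, so the nontrivial effect of the superconformal structure is deferred to the chiral product $\mu^{\tsc}$ rather than to the factorization isomorphisms.
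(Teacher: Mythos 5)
Your proof is correct and in substance identical to the paper's: the paper's own argument also comes down to the fact that a map $c_J\colon S \to U^{\tsc}_p$ still has pairwise disjoint graphs $\Gamma(c_{J_i})$, which is exactly your observation that $U^{\tsc}_p$ and $U_p$ have the same underlying open subset of $\ul{C}^J$ because $\zeta_j\zeta_k$ is nilpotent. The only cosmetic difference is that the paper re-runs the $S$-point description of $\kappa_p$ on $U^{\tsc}_p$, whereas you state the identification $U^{\tsc}_p=U_p$ outright and restrict $\kappa_p$; both yield the same isomorphism.
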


\begin{proof}
Recall the argument on $\kappa_p$ explained in the paragraph after \cref{fct:FS:LSC}.
For a superscheme $S$, an $S$-point of $j_p^*(\LC{J}X)$ is a pair $(c_J,\phi)$
with $c_J\colon S \to U_p^{\tsc}$.
Hence, we have $\Gamma(c_J)=\bigsqcup_{i \in I}\Gamma(c_{J_i})$, and 
the sheaves $\shK^{\sqr}$ associated to them are isomorphic.
This shows the equality of $S$-points of ${j_p^{\tsc}}^*(Y_J)$ and 
${j_p^{\tsc}}^*(\tprd_{i \in I}Y_{J_i})$, which yields the isomorphism $\kappa_p^{\tsc}$.
\end{proof}

We denote by $p_J\colon Y_J \to C^J$ the structure morphism. 
Then, $\shF_J \ceq {p_J}_*\shO_{Y_J}$ for non-empty finite sets $J$ form 
a factorization $\shD$-module by \cref{lem:FS:p*O}.
Hence, we have an isomorphism of left $\shD$-modules
\[
 \kappa_p^{\tsc}\colon {j_p^{\tsc}}^*(\boxtimes_{i \in I} \shF_{J_i}) \lsto {j_p^{\tsc}}^*(\shF_J).
\]
In particular, for the surjection $p_{[2]}\colon \{1,2\} \to \{1\}$, we have an isomorphism 
$\kappa_{p_{[2]}}^{\tsc}\colon {j_{p_{[2]}}^{\tsc}}^*(\shF_1^{\boxtimes 2}) \lsto {j_p^{\tsc}}^*(\shF_2)$.
Here and hereafter, we denote $\shF_1 \ceq \shF_{\{1\}}$ and $\shF_2 \ceq \shF_{\{1,2\}}$.

Next, recall the isomorphism 
$\nu_p\colon \Delta_p^*(\shF_J) \sto \shF_I$ of the factorization $\shD$-module $(\shF_I)_I$.
For the surjection $p_{[2]}$, we have $\nu_{p_{[2]}}\colon \Delta^*(\shF_2) \sto \shF_1$,
where $\Delta$ is the (ordinary) diagonal embedding into $C^2$.
Since the superdiagonal embedding $\Delta^{\tsc} \subset C^2$ is a nilpotent extension 
of the ordinary diagonal $\Delta \subset C^2$,
the isomorphism $\nu_{{p_{[2]}}}\colon \Delta^*(\shF_2) \sto \shF_1$ induces a left $\shD$-module morphism
\begin{align}\label{eq:NK:nu}
 \nu_{p_{[2]}}^{\tsc}\colon {\Delta^{\tsc}}^*(\shF_2) \lto \shF_1.
\end{align}

Now we modify the construction of the chiral bracket $\mu$ on $\shA \ceq (\shF_1)^r$ 
\eqref{eq:FS:mu} using \cref{lem:NK:kappa} and \eqref{eq:NK:nu}.
Let us denote $j^{\tsc} \ceq j^{\tsc}_{p_{[2]}}\colon U^{\tsc} \inj C^2$ for simplicity.
Then, by \eqref{eq:SC:jj}, the morphism $\mu_\omega^{\tsc}$ in \eqref{eq:SC:mu-om-sc} is written as 
$\mu_\omega^{\tsc}\colon j^{\tsc}_*{j^{\tsc}}^*(\omega_C^{\boxtimes 2}) \to \Delta^{\tsc}_*\omega_C$.
Also, the left $\shD$-module morphism $\nu_{p_{[2]}}^{\tsc}$ induces a right $\shD$-module morphism
$(\nu^{\tsc})^r\colon (\Delta^{\tsc}_*\omega_C)\otimes\shF_2 \to \Delta^\tsc_*(\omega_C\otimes \shF_1)
=\Delta^{\tsc}_*\shA$.
Now we can replace $\kappa_{p_{[2]}}$, $\mu_\omega$ and $\nu^r$ in \eqref{eq:FS:mu} 
by $\kappa_{p_{[2]}}^{\tsc}$, $\mu_\omega^{\tsc}$ and $(\nu^{\tsc})^r$ to obtain 
\begin{align}\label{eq:NK:mu-sc}
 \mu^{\tsc}\colon \shA^{\boxtimes 2}(\infty\Delta^s) = 
 j^{\tsc}_*{j^{\tsc}}^*\bigl((\omega_C \otimes \shF_1)^{\boxtimes 2}\bigr) 
 \xr[\id \otimes \kappa_{p_{[2]}}^{\tsc}]{\sim}
 \bigl(j^{\tsc}_*{j^{\tsc}}^*(\omega_C^{\boxtimes 2})\bigr) \otimes \shF_2
 \xr[\mu_\omega^{\tsc} \otimes \id]{} (\Delta^{\tsc}_*\omega_C) \otimes \shF_2
 \xr[(\nu^{\tsc})^r]{}
 \Delta^{\tsc}_*\shA.
\end{align}

According to \cite[\S4]{H}, the obtained chiral product $\mu$ and 
a choice of a superconformal coordinate system $Z=(z,\zeta)$ at a point $c \in C$ 
determine the structure of an $N_K=1$ SUSY vertex algebra on the vector superspace $V$ 
which is the fiber of $\shF_1$ at $c$.
By \cref{cor:FS:CdR}, we know that $V$ is isomorphic to the chiral de Rham complex $\Omega_X^{\ch}$,
and the obtained structure of $N_K=1$ SUSY vertex algebra on $V$ 
is consistent with the vertex algebra on $V \cong \Omega_X^{\ch}$.
Thus, we see that the obtained $N_K=1$ SUSY structure is the standard one 
known in the literature (see \cite[\S4]{BHS} and \cite[Example 5.13]{HK}).

We conclude that, for a smooth algebraic variety $X$ and a superconformal curve $(C,\shS)$, 
the factorization structure on the global superloop space $\JC{}X$ naturally induces 
the standard $N_K=1$ SUSY vertex algebra structure on the chiral de Rham complex $\Omega_X^{\ch}$.
This is the confirmation of the comment in \cite[Remark 5.3.4]{KV11}.


\end{document}